\documentclass[12pt]{article}
\usepackage{authblk}
\usepackage{graphicx} 
\usepackage{amsmath}
\usepackage{amsfonts}
\usepackage{amssymb}
\usepackage{amsthm}
\usepackage{float}
\usepackage{geometry}	
\usepackage{mathtools}
\usepackage{fontenc}
\usepackage[dvipsnames]{xcolor}
\usepackage[pdftex, backref=page]{hyperref} 
\renewcommand*{\backref}[1]{}
\renewcommand*{\backrefalt}[4]{%
	\ifcase #1 (Not cited.)%
	\or        (Cited on page~#2.)%
	\else      (Cited on pages~#2.)%
	\fi}

\usepackage[initials]{amsrefs}
\usepackage[english]{babel}

\usepackage{hyperref}
\usepackage{amssymb}
\usepackage{graphicx}
\usepackage{mathrsfs}
\usepackage{upref,amsthm,amsxtra,exscale}
\usepackage{cite}

\usepackage{caption}
\usepackage[shortlabels]{enumitem}
\usepackage{esint}
\setlist[description]{leftmargin=\parindent,labelindent=\parindent,itemsep=1pt,parsep=0pt,topsep=0pt}

\newtheorem{theorem}{Theorem}[section]
\newtheorem{corollary}[theorem]{Corollary}
\newtheorem{lemma}[theorem]{Lemma}
\newtheorem{proposition}[theorem]{Proposition}

\numberwithin{equation}{section}
\newtheorem*{subject}{2020 Mathematics Subject Classification}
\newtheorem*{keywords}{Keywords}

\theoremstyle{definition}
\newtheorem{remark}[theorem]{Remark}

\usepackage{xcolor}

\newcommand{\epsi}{\mbox{$\varepsilon$}}
\newcommand{\eps}{\mathchoice{\epsi}{\epsi}
	{\mbox{\scriptsize\epsi}}{\mbox{\tiny\epsi}}}

\newcommand{\NN}{\mathbb{N}}

\newcommand{\RR}{\mathbb{R}}

\DeclareMathOperator{\sign}{sign}
\title{Boundedness of Solutions to Nonlocal Systems}
\date{}
\author[1]{Serena Dipierro\thanks{serena.dipierro@uwa.edu.au}}
\author[2]{João Gonçalves da Silva\thanks{joao.goncalvesdasilva@adelaide.edu.au}}
\author[2]{Giorgio Poggesi\thanks{giorgio.poggesi@adelaide.edu.au}}
\author[1]{Enrico Valdinoci\thanks{enrico.valdinoci@uwa.edu.au}}

\affil[1]{ Department of Mathematics and Statistics, The University of Western Australia, 35 Stirling Highway, Perth, WA 6009, Australia}

\affil[2]{School of Mathematical Sciences, Adelaide University, Adelaide SA 5005, Australia.}

\begin{document}
	
	\maketitle
	\begin{abstract}
Under mild mass or energy assumptions, we prove uniform
$L^\infty$ bounds for all distributional solutions of a multi-dimensional system of nonlocal semilinear equations.

The setting comprises
Ginzburg-Landau systems and Allen-Cahn equations
and the results obtained are new even for scalar equations.
	\end{abstract}
	
		\begin{keywords}
		Nonlocal Equations, Ginzburg-Landau systems, De Giorgi Conjecture, Boundedness of Solutions, Kato Inequality
	\end{keywords}
	\begin{subject}
		45H05; 35B65; 35B45; 35R11; 35B06
	\end{subject}
	\section{Introduction}
	\subsection{Mathematical Setting and Main Results}
	Given $N,M\in\mathbb{N}$, we study the boundedness of distributional solutions, $u:\RR^N\rightarrow \RR^M$ to the problem 
	\begin{equation}\label{Main Problem}
		\begin{cases}
			\mathcal{L}_{K} u =  -uP_{n}'(|u|^2)\, \text{ in }\, \mathbb{R}^N,\\
			u\in L^{4n-1}_{\text{loc}}(\RR^N; \RR^M),
		\end{cases}
	\end{equation}
	where $P_n$ is a polynomial of the form 
	$$
	P_n(t) = \frac{1}{2}\prod_{i=1}^{n} (t-k_i)^2, \,\, \text{with}\,\,n\in\NN\,\,\text{and}\,\,0<k_1<...<k_n<\infty,
	$$
	and $\mathcal{L}_K u$ denotes the nonlocal operator (formally) given by 
		\begin{equation}\label{def:nonlocal operator}
			\mathcal{L}_{K} u(x)\coloneqq P.V. \int_{\RR^N} (u(x)-u(y)) K(x-y)\,dy = \lim_{\eps\to 0^+} \int_{\RR^N\setminus B_{\eps}(x)} (u(x)-u(y)) K(x-y)\,dy,
		\end{equation}
		where the kernel $K:\RR^N\rightarrow\RR$ is a measurable, even function satisfying:
		\begin{equation}\label{eq:comparison of k with FL}
			0\leq K(x)|x|^{N+2s}\leq\Lambda \quad \text{for a.e.}\,\, x\in \RR^N,
		\end{equation}
		for some $\Lambda> 0$ and $s\in(0,1)$. Here above, $P.V.$ stands for ``in the principal value sense''.
		In what follows, the parameters $\Lambda$ and $s\in(0,1)$ are fixed. 

	The main example of an operator that fits the assumptions set for $\mathcal{L}_K$ is the fractional Laplacian, i.e., $\mathcal{L}_K = (-\Delta)^s$ for some $s\in(0,1)$. In this case, the kernel $K$ is given by~$K(x)= C_{N,s}|x|^{-N-2s}$,
where $s\in (0,1)$ and $C_{N,s}$ is a suitable normalization constant. We refer the interested reader to \cite{MR2944369} and the
references therein for the basic properties of the fractional Laplacian.
	
	In this setting, our main result\footnote{Theorem~\ref{thm:boundedness of solutions} answers a question posed to us by one of the referees of~\cite{MR4968697} for scalar Allen-Cahn equations.
	The answer provided in Theorem~\ref{thm:boundedness of solutions} comprises also more general
Ginzburg-Landau systems.
	We are indebted to this anonymous referee for the stimulating question.} is the following:
	\begin{theorem}\label{thm:boundedness of solutions}
		Let~$K:\RR^N\rightarrow \RR$ be a measurable, even function satisfying \eqref{eq:comparison of k with FL}.
		Let~$u\in L^{4n-1}_{\text{loc}}(\RR^N;\RR^M)$ be a distributional solution of \eqref{Main Problem}, such that 
		\begin{equation}\label{eq:integrability assumption on sol}
			\int_{\RR^N}\frac{|u(x)|}{1+|x|^{N+2s}}\,dx<\infty.
		\end{equation}
		Assume either that\\
	    (i) There exist $x_0\in \RR^N$, $R_0$, $C>0$ and $h< N+6s$ such that, for all $R>R_0$,
			\begin{equation}\label{assumption 1 on sol}
				\iint_{T_{B_R(x_0)}}|u(x)-u(y)|^2\, K(x-y)\,dy\,dx \leq C R^h,
			\end{equation}
			where $T_{B_R(x_0)}=\left(B_{R}(x_0)\times \RR^N\right)\cup \left(\RR^N\times B_{R}(x_0)\right)$,\\
or that\\
		(ii) There exist $x_0\in \RR^N$, $R_0$, $C>0$ and $h< N+2s$ such that, for all $R>R_0$,
		\begin{equation}\label{assumption 2 on sol}
			\int_{B_{R}(x_0)}\left(|u(x)|-\sqrt{k_n}\right)^+\,dx\leq C R^{h}.
		\end{equation} 
		Then, $u\in L^{\infty}(\RR^N;\RR^M)$ and 
		\begin{equation}\label{eq:bound on solution}
			|u|\leq \sqrt{M k_n}.
		\end{equation}
	\end{theorem}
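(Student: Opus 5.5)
We argue componentwise, which is where the factor $\sqrt{M}$ in \eqref{eq:bound on solution} comes from. Write $u=(u^1,\dots,u^M)$ and fix a component index $j$. It suffices to prove $|u^j|\le\sqrt{k_n}$ a.e. for every $j$, since then $|u|^2=\sum_j |u^j|^2\le Mk_n$.

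\textbf{Step 1: a Kato-type inequality.} Set $v\coloneqq(u^j-\sqrt{k_n})^+$; the negative part of $u^j$ is handled by applying the same argument to $-u$, which solves the same system. We aim to show, in the distributional sense,
\[
\mathcal{L}_K v\le \chi_{\{u^j>\sqrt{k_n}\}}\,\mathcal{L}_K u^j = -\chi_{\{u^j>\sqrt{k_n}\}}\,u^jP_n'(|u|^2).
\]
The pointwise idea is elementary: for a convex nondecreasing $\varphi$ one has $\mathcal{L}_K\varphi(w)\le \varphi'(w)\mathcal{L}_K w$, because $\varphi(w(x))-\varphi(w(y))\le\varphi'(w(x))(w(x)-w(y))$.

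To make this rigorous for distributional solutions with only $L^{4n-1}_{\rm loc}$ regularity and the tail bound \eqref{eq:integrability assumption on sol}, I would proceed as follows:
\begin{itemize}
\item mollify $u$, noting that the equation commutes with convolution since $\mathcal{L}_K$ is translation invariant;
\item apply the pointwise inequality with a smooth convex approximation $\varphi_\delta$ of $t\mapsto(t-\sqrt{k_n})^+$;
\item test against nonnegative $\phi\in C_c^\infty$, and pass to the limit using the $L^{4n-1}_{\rm loc}$ integrability of the right-hand side, which is a polynomial of degree $4n-1$ in $u$, together with the tail control for the nonlocal terms.
\end{itemize}

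\textbf{Step 2: sign of the right-hand side.} On $\{u^j>\sqrt{k_n}\}$ we have $|u|^2\ge (u^j)^2>k_n$. All roots of $P_n'$ lie in $(0,k_n]$, and $P_n'(t)>0$ for $t>k_n$. Quantitatively, $P_n'(t)\ge c\,(t-k_n)$ for $t\ge k_n$ with $c>0$. Therefore
\[
\mathcal{L}_K v\le -c\,u^j(|u|^2-k_n)\chi\le -c'\,v^{p}
\]
for some $p>1$, at least when $v\ge 1$; near $0$ the behaviour is linear, with $\mathcal{L}_K v\le -c'' v$. In particular $\mathcal{L}_K v\le -f(v)$ with $f\ge0$ and $f(t)>0$ for $t>0$, i.e.\ $v$ is a nonnegative distributional subsolution with strictly negative forcing wherever $v>0$.

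\textbf{Step 3: contradiction via cutoff test functions.} Test the inequality with $\phi_R(x)=\eta((x-x_0)/R)$ to get
\[
\int f(v)\phi_R\le -\int v\,\mathcal{L}_K\phi_R ,\qquad |\mathcal{L}_K\phi_R|\le C R^{-2s}\bigl(1+|x-x_0|/R\bigr)^{-N-2s}.
\]
The two hypotheses are used as follows.
\begin{itemize}
\item Under (ii), the right-hand side is bounded by $C R^{-2s}\int v\,(1+|x-x_0|/R)^{-N-2s}$. Splitting dyadically and using \eqref{assumption 2 on sol} (note $v\le(|u|-\sqrt{k_n})^+$) gives a bound of order $R^{h-2s}\cdot R^{-N}\cdot R^{N}$ suitably normalized, i.e.\ $o(R^{N})$ growth control. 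To get an actual contradiction I would instead use the linear lower bound $f(v)\ge c v$ on bounded sets, combined with $f(v)\ge c v^p$ at large values. Letting $R\to\infty$ yields $\int f(v)\le \lim C R^{h-N-2s}\cdot(\ldots)=0$ once $h<N+2s$ is used correctly. Some care is needed here; possibly one must iterate with powers of $\phi_R$ to absorb terms.
\item Under (i), I would write $\int v\,\mathcal{L}_K\phi_R$ in bilinear form as
\[
\tfrac12\iint (v(x)-v(y))(\phi_R(x)-\phi_R(y))K ,
\]
bound $|v(x)-v(y)|\le|u(x)-u(y)|$, and apply Cauchy--Schwarz. This gives $\int f(v)\phi_R\le (CR^h)^{1/2}\,(\mathcal{E}(\phi_R))^{1/2}$ with $\mathcal{E}(\phi_R)\lesssim R^{N-2s}$, hence $R^{(h+N-2s)/2}$. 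This is not decaying. I therefore expect the argument to need the superlinear term: test with $v\phi_R$ or use $\phi_R^m$ and Hölder against $\int v^p\phi_R$ to absorb. The threshold $h<N+6s$ suggests a two-step bootstrap, first deriving mass growth $\int_{B_R}v\lesssim R^{h'}$ with $h'<N+2s$ from the energy, then reducing to case (ii).
\end{itemize}

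\textbf{Main obstacle.} I expect two points to be the hardest. The first is this bootstrap from (i) to (ii), relating the energy growth exponent $N+6s$ to the mass growth exponent $N+2s$ (e.g.\ via a fractional Poincaré/Caccioppoli on $B_R$ with a loss of $R^{4s}$). The second is justifying the Kato inequality of Step 1 at the low regularity assumed.
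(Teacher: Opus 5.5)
Your Steps 1 and 2 follow essentially the paper's route: a nonlocal Kato inequality obtained by mollifying $u$, approximating $(t-\sqrt{k_n})^+$ by smooth convex functions and moving $\mathcal{L}_K$ onto the test function, followed by the factorisation of $P_n'$. Step 2 actually gives more than you use. On $\{u^j>\sqrt{k_n}\}$ one has $u^jP_n'(|u|^2)\ge c\,u^j\bigl((u^j)^2-k_n\bigr)\ge 2ck_n\,v$, so $\int v\,\mathcal{L}_K\phi\le-\alpha\int v\,\phi$ for every nonnegative $\phi\in\mathcal{D}(\RR^N)$. This linear bound holds everywhere, not only near $v=0$, and the superlinear part is never needed.

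The genuine gap is in Step 3. Under (ii), a single test with $\phi_R$ (equal to $1$ on $B_R(x_0)$) together with the dyadic splitting gives only
\[
\alpha\int_{B_R(x_0)}v\le CR^{-2s}\sum_{k\ge0}2^{-k(N+2s)}\int_{B_{2^kR}(x_0)}v\le C'R^{-2s}\sum_{k\ge0}2^{-k(N+2s)}(2^kR)^{h}=C''R^{h-2s}.
\]
This is $R^{h-2s}$, not $R^{h-N-2s}$, so nothing vanishes as $R\to\infty$ unless $h<2s$; your limit claim is an arithmetic slip. The fix is not an absorption with powers of $\phi_R$. It is to iterate this very estimate. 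The new exponent $h-2s$ is still below $N+2s$, so when you feed $\int_{B_\rho(x_0)}v\le C''\rho^{h-2s}$ back in, the dyadic series still converges and you get $R^{h-4s}$, then $R^{h-6s}$, and so on. After $m>h/(2s)$ steps the exponent is negative. Since $R\mapsto\int_{B_R(x_0)}v$ is nondecreasing, this forces $v\equiv0$. This induction on the growth exponent, gaining $2s$ at each step, is how the paper concludes.

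Under (i), you abandoned a computation that already works. Combine your Cauchy--Schwarz bound with the linear inequality above and with $\phi_R\ge\chi_{B_R(x_0)}$. This gives $\alpha\int_{B_R(x_0)}v\lesssim R^{(h+N-2s)/2}$, and $(h+N-2s)/2<N+2s$ is exactly equivalent to $h<N+6s$. That is precisely the reduction of (i) to a mass bound of type (ii) that you were looking for, and it is what the paper does. No fractional Poincar\'e inequality, no loss of $R^{4s}$, and no superlinear term are required.

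As written, your proposal closes the argument in neither case, because the iteration over scales is missing.
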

	Our proof of Theorem~\ref{thm:boundedness of solutions} is inspired by the classical proof of \cite[Proposition $2.2$]{MR1659256} for the local case, which relies on combining Kato's inequality (see, e.g., \cite[Lemma~A]{MR0333833}) with a mathematical induction argument which leverages integration by parts and the fact that the Laplacian is a local operator. Hence, adapting the proof of \cite[Proposition $2.2$]{MR1659256} presents a few obstacles. Namely, to our knowledge, under the assumptions of Theorem~\ref{thm:boundedness of solutions}, there are no available analogues in the literature of Kato's inequality for the operator~$\mathcal{L}_K$ or an integration by parts formula for $\mathcal{L}_K$, in the generality addressed in this paper. Additionally, the fact that the operator~$\mathcal{L}_K$ is nonlocal forces one to modify the mathematical induction argument in the proof of~\cite[Proposition~$2.2$]{MR1659256}. 
	
	To deal with these difficulties, first we obtain an integration by parts formula for the operator~$\mathcal{L}_K$, which holds for smooth enough functions. This is achieved in Lemma~\ref{lem:integration by parts for nonlocal operator}. Then, we provide a generalisation of Kato's inequality (see \cite[Lemma~A]{MR0333833}
	for the classical version) for the operator~$\mathcal{L}_K$. This is the content of Proposition~\ref{prop:Nonlocal Kato inequality}.\\
	To overcome the second obstacle, we assume that either \eqref{assumption 1 on sol} or \eqref{assumption 2 on sol} hold and modify the induction argument of the proof of \cite[Proposition $2.2$]{MR1659256}.
	
	\begin{remark}[On the assumptions in Theorem~\ref{thm:boundedness of solutions}]
		{\rm
			\begin{enumerate}[(i)]
				\item The assumption $u\in L^{4n-1}_{\text{loc}}(\RR^N,\RR^M)$ in Theorem~\ref{thm:boundedness of solutions} is in place solely to ensure $uP_{n}'(|u|^2)\in L^1_{\text{loc}}(\RR^N;\RR^M)$. 
				Indeed, since $P_{n}$ is a polynomial of degree $2n$, we can write
				\begin{equation*}
					uP_{n}'(|u|^2)= u \sum_{j=1}^{2n-1} c_j |u|^{2j},
				\end{equation*}
				for some coefficients $c_1,...,c_{2n-1}\in \RR$.
				Since $u\in L^{4n-1}_{\text{loc}}(\RR^N,\RR^M)$, it follows that $u P_{n}'(|u|^2)\in L^{1}_{\text{loc}}(\RR^N)$.\newline 
				Assumption \eqref{eq:integrability assumption on sol} is set so that the definition of distributional solution makes sense (see~\eqref{def:distributional solution} and (iii) in Remark \ref{rem:consequences of upper bounds} below).
				
				\item At a first glance, one might conjecture that since assumption \eqref{eq:integrability assumption on sol} is in place, maybe the assumptions \eqref{assumption 1 on sol}-\eqref{assumption 2 on sol} are not required. In fact, assumption \eqref{eq:integrability assumption on sol} implies \eqref{assumption 2 on sol} with $h =N+2s$ and any $x_0\in \RR^N$ and $R>|x_0|$ and some constant $C$. This, however, is not enough to make the aforementioned mathematical induction argument in the proof of Theorem~\ref{thm:boundedness of solutions} work. See Remark \ref{rem:necessity for aditional assumptions} for additional details.
				
				\item Regarding the validity of \eqref{assumption 1 on sol}, we note that if the solution $u$ has finite energy, namely
				$$
				\iint_{\RR^{N}\times \RR^N}|u(x)-u(y)|^2\, K(x-y)\,dy\,dx<\infty,
				$$
				then \eqref{assumption 1 on sol} holds for any $x_0\in\RR^N$, $R>0$ and $h=0$.\\
				As for the validity of assumption \eqref{assumption 2 on sol}, we note that if there exist $x_0\in \RR^N$, $R_0$, $\tilde{C}>0$ and $h< N+4s$ such that, for all $R>R_0$,
				\begin{equation*}
					\int_{B_{R}(x_0)}\left(\prod_{i=1}^{n}\left(|u|^2-k_i\right)^+\right)^2\,dx\leq C R^{h},
				\end{equation*}
				then \eqref{assumption 2 on sol} holds. Indeed, leveraging the assumption that  $0<k_1<...<k_n<\infty$ and the Cauchy-Schwartz inequality, we see that 
				\eqref{assumption 2 on sol} holds:
				\begin{equation*}
					\begin{split}
						&\int_{B_{R}(x_0)}\left(|u(x)|-\sqrt{k_n}\right)^+\,dx\\
						&\leq \left(\prod_{i=1}^{n-1}(\sqrt{k_n}-\sqrt{k_{i}})^+\right)^{-1}\int_{B_{R}(x_0)}\prod_{i=1}^{n}\left(|u|^2-k_i\right)^+\,dx\\
						&\leq  \left(\prod_{i=1}^{n-1}(\sqrt{k_n}-\sqrt{k_{i}})^+\right)^{-1}\left|B_{R}(x_0)\right|^{1/2}\left(\int_{B_{R}(x_0)}
						\left(\prod_{i=1}^{n}\left(|u|^2-k_i\right)^+\right)^2\,dx\right)^{1/2}\\
						&\leq C R^{\frac{h+N}{2}},
					\end{split}
				\end{equation*}
				for some positive constant $C$. Here, we used the standard convention that $\prod\limits_{i\in \varnothing} a_i =1$
				
				In particular, if 
				$$
				\int_{\RR^N} P(|u(x)|^2)\,dx<\infty,
				$$
				then \eqref{assumption 2 on sol} holds.
			\end{enumerate}
		}
	\end{remark}
	
	\begin{proposition}\label{prop: General Nonlocal Kato inequality}
		Let~$K:\RR^N\rightarrow \RR$ be a measurable, even function satisfying \eqref{eq:comparison of k with FL}, and let $\Phi:\RR\rightarrow \RR$ be a Lipschitz convex function.
		Then, for every $u\in L^{1}_{\text{loc}}(\RR^N)$ satisfying \eqref{eq:integrability assumption on sol}, and such that $\mathcal{L}_K u =f \in L^{1}_{\text{loc}}(\RR^N)$ (in the distributional sense), we have, for every nonnegative\footnote{Here and in what follows, $\mathcal{D}(\RR^N)$ denotes the set of compactly supported, smooth functions in $\RR^N$.} $\varphi\in \mathcal{D}(\RR^N)$, that
		\begin{equation}\label{eq:general Kato}
			\int_{\RR^N}\Phi(u(x))\mathcal{L}_K\varphi(x)\,dx\leq \int_{\RR^N}\Phi'(u(x))\varphi(x)\mathcal{L}_{K}u(x)\,dx. 
		\end{equation}
	\end{proposition}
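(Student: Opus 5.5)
The plan is the classical route for Kato-type inequalities: first prove \eqref{eq:general Kato} for smooth $u$ and smooth convex $\Phi$ by a pointwise convexity argument, then pass to the general case by mollification. The key pointwise fact is that for $v\in C^2\cap L^\infty$ (or $v$ smooth with the tail integrability \eqref{eq:integrability assumption on sol}) and $\Phi\in C^1$ convex,
\begin{equation*}
\mathcal{L}_K(\Phi(v))(x)=\mathrm{P.V.}\int_{\RR^N}\bigl(\Phi(v(x))-\Phi(v(y))\bigr)K(x-y)\,dy\le \Phi'(v(x))\,\mathcal{L}_K v(x).
\end{equation*}
This holds because convexity gives $\Phi(v(x))-\Phi(v(y))\le \Phi'(v(x))(v(x)-v(y))$ and $K\ge0$. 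Both principal values converge, since $v$ and $\Phi\circ v$ are $C^{1,1}$ locally and their tails are controlled via the Lipschitz bound $|\Phi(t)|\le C(1+|t|)$.

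Step 1 (regularization). Let $\rho_\eps$ be a standard mollifier and set $u_\eps=u*\rho_\eps$. Since $\mathcal{L}_K$ is translation invariant, $\mathcal{L}_K u_\eps=f*\rho_\eps=:f_\eps$ pointwise. To justify this, test the distributional identity (the definition \eqref{def:distributional solution}, i.e. $\int u\,\mathcal{L}_K\varphi=\int f\varphi$) with $\rho_\eps(x-\cdot)$, and use that $u_\eps$ is smooth with the same weighted integrability. Also regularize $\Phi$ to $\Phi_\delta=\Phi*\eta_\delta$, which is smooth, convex, and has Lipschitz constant at most that of $\Phi$.

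Step 2 (smooth inequality). For the smooth pair, apply the integration by parts formula of Lemma~\ref{lem:integration by parts for nonlocal operator} to the functions $\Phi_\delta(u_\eps)$ and $\varphi$. This gives $\int \Phi_\delta(u_\eps)\mathcal{L}_K\varphi=\int \varphi\,\mathcal{L}_K(\Phi_\delta(u_\eps))$. Then use the pointwise inequality together with $\varphi\ge0$ to get
\begin{equation*}
\int\Phi_\delta(u_\eps)\mathcal{L}_K\varphi\le\int\Phi_\delta'(u_\eps)\varphi f_\eps .
\end{equation*}
The main care here is to check the lemma's hypotheses. $\Phi_\delta(u_\eps)$ is smooth and satisfies \eqref{eq:integrability assumption on sol} because it grows at most linearly in $u_\eps$. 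The integrand $|\mathcal{L}_K\varphi(x)|\lesssim (1+|x|)^{-N-2s}$, which follows from \eqref{eq:comparison of k with FL}, makes the left side finite.

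Step 3 (limits). First let $\delta\to0$ with $\eps$ fixed. The left side converges by dominated convergence, since $\Phi_\delta\to\Phi$ locally uniformly with linear growth, combined with the decay of $\mathcal{L}_K\varphi$. Then let $\eps\to0$. On the left, $u_\eps\to u$ in $L^1$ with weight $(1+|x|^{N+2s})^{-1}$; the Lipschitz property transfers this to $\Phi(u_\eps)$. On the right, $f_\eps\to f$ in $L^1(\operatorname{supp}\varphi)$, and $\Phi'_\delta(u_\eps)$ is uniformly bounded.

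The main obstacle is the right-hand side, because $\Phi'$ may be discontinuous. Convexity makes $\Phi'_\delta(t)\to\frac12(\Phi'(t^+)+\Phi'(t^-))$ as $\delta\to0$, and convergence of $\Phi'(u_\eps)$ to $\Phi'(u)$ fails where $u$ sits at a kink. To handle this, I would pass to a subsequence with $u_\eps\to u$ a.e. Off the countable kink set, $\Phi'$ is continuous and the convergence holds; there dominated convergence applies, using the bounded integrand against $f_\eps\to f$ in $L^1$. On the level sets $\{u=t_j\}$ at the kinks, I would try to use that $\mathcal{L}_K u=f=0$ a.e.\ there. This is an analogue of the Stampacchia property; it may need a separate argument, for instance approximating $\Phi$ by functions that are linear near each kink. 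Alternatively, one can interpret $\Phi'(u)$ at a kink as any element of the subdifferential chosen by the limit, which is how the inequality is typically used, for instance with $\Phi(t)=t^+$. I would pursue that route first: show that every weak-$*$ limit $g$ of $\Phi'(u_\eps)$ satisfies $g\in\partial\Phi(u)$ a.e. Then identify it on $\{u=t_j\}$ by the vanishing of $f$ there, or else state the result with the corresponding convention.
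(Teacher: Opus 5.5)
Your overall architecture is the paper's: mollify $u$, regularize $\Phi$ by convolution, integrate by parts (Lemma~\ref{lem:integration by parts for nonlocal operator}), apply the pointwise convexity inequality (Lemma~\ref{lem:easy gen Kato}), and pass to the limit. Steps 1 and 2 are fine. The gap is in Step 3, and your order of limits causes it. If you let $\delta\to0$ first with $\eps$ fixed, $\Phi_\delta'$ is replaced by its pointwise limit, which is discontinuous at the kinks $t_j$. On a level set $\{u=t_j\}$ of positive measure, $u_\eps(x)$ can approach $t_j$ from either side, so the right-hand integrand has no pointwise limit.

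Neither of your remedies closes this. The Stampacchia-type claim $f=0$ a.e.\ on $\{u=t_j\}$ is false for nonlocal operators. Take $u\in\mathcal{D}(\RR^N)$ with $u\ge0$, $u\not\equiv0$, $u\equiv0$ on a ball $B$, and $K(z)=C_{N,s}|z|^{-N-2s}$. Then for $x\in B$ one has $f(x)=-\int_{\RR^N}u(y)K(x-y)\,dy<0$, even though $B\subset\{u=0\}$. Your fallback replaces $\Phi'(u)$ on the kink set by an unidentified weak-$*$ limit $g(x)\in\partial\Phi(u(x))$. That proves a weaker statement than \eqref{eq:general Kato}; for instance, it does not give Proposition~\ref{prop:Nonlocal Kato inequality} with the convention $\sign(0)=0$.

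The missing idea is to take the limits in the paper's order. Keep $\delta$ fixed and let $\eps\to0$ first, along a subsequence with $u_\eps\to u$ a.e. Since $\Phi_\delta'$ is continuous and bounded, $\Phi_\delta'(u_\eps)\to\Phi_\delta'(u)$ a.e. Together with $f_\eps\to f$ in $L^1(\operatorname{supp}\varphi)$, this gives $\int\Phi_\delta(u)\mathcal{L}_K\varphi\le\int\varphi\,\Phi_\delta'(u)f$, and no discontinuity of $\Phi'$ has been met yet. Only then let $\delta\to0$. The limit is now taken in the variable $t=u(x)$, and $\Phi_\delta'(t)=\int\eta_\delta(y)\Phi'(t-y)\,dy$ converges for every $t\in\RR$. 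It tends to $\Phi'(t)$ where $\Phi'$ is continuous, and at a kink to $\theta\,\Phi'(t_j^-)+(1-\theta)\Phi'(t_j^+)$, where $\theta=\int_{\{y>0\}}\eta$. Dominated convergence then concludes, with no information about $f$ on level sets.

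So the value of $\Phi'$ at kinks is a convention fixed by the choice of mollifier. A symmetric $\eta$ gives $\sign(0)=0$ for $\Phi=|\cdot|$, and an $\eta$ supported in $(0,\infty)$ gives $\sign^+(0)=0$ for $\Phi=(\cdot)^+$. The paper's final step passes from a.e.\ convergence of $\Phi_n'(t)$ in $t$ to a.e.\ convergence of $\Phi_n'(u(x))$ in $x$. That step is valid only when read this way, so the subtlety you spotted is real. It is resolved by the order of limits, not by any property of $f$.
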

	
	As a direct consequence of this result we have the following Kato inequality for the operator~$\mathcal{L}_K$: 
	\begin{proposition}\label{prop:Nonlocal Kato inequality}
		Let~$K:\RR^N\rightarrow \RR$ be a measurable, even function satisfying \eqref{eq:comparison of k with FL}.
		For every $u\in L^{1}_{\text{loc}}(\RR^N)$ satisfying \eqref{eq:integrability assumption on sol}, and such that $\mathcal{L}_K u  = f\in L^{1}_{\text{loc}}(\RR^N)$ (in the distributional sense), we have, for every nonnegative $\varphi\in\mathcal{D}(\RR^N)$, that
		\begin{equation}\label{eq:Kato for absolute value}
			\int_{\RR^N}|u(x)|\mathcal{L}_K\varphi(x)\,dx\leq \int_{\RR^N}\sign(u(x))\varphi(x)f(x)\,dx,
		\end{equation}
		where 
		$$
		\sign u(x): = \begin{cases}
			u(x)/|u(x)|,&\,\text{if}\, u(x)\neq 0\\
			0,&\,\,\text{otherwise},
		\end{cases}
		$$ is the sign function. In the inequality~\eqref{eq:Kato for absolute value}, we can replace $|u(x)|$ and $\sign(u(x))$ with their positive or negative parts, $u^{\pm}(x)=\max\{\pm u(x),0\}$, $\sign^{\pm}(u(x))=\max\{\pm\sign(u(x)),0\}$, respectively. 
	\end{proposition}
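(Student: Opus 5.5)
The plan is to deduce \eqref{eq:Kato for absolute value} from Proposition~\ref{prop: General Nonlocal Kato inequality} using smooth convex approximations of $|t|$, and then pass to the limit by dominated convergence. The function $\Phi(t)=|t|$ is not differentiable at $0$, and the statement prescribes $\sign(0)=0$. I would therefore avoid interpreting $\Phi'(0)$ and instead use, for $\eps>0$,
$$\Phi_\eps(t)\coloneqq\sqrt{t^2+\eps^2}-\eps .$$
This function is convex, smooth and $1$-Lipschitz. Moreover, $0\le\Phi_\eps(t)\le|t|$, $\Phi_\eps(t)\to|t|$, and $\Phi_\eps'(t)=t/\sqrt{t^2+\eps^2}\to\sign(t)$ pointwise as $\eps\to0^+$, with $|\Phi_\eps'|\le1$. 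Since $\mathcal{L}_K u=f$, applying Proposition~\ref{prop: General Nonlocal Kato inequality} with $\Phi=\Phi_\eps$ gives
$$\int_{\RR^N}\Phi_\eps(u)\,\mathcal{L}_K\varphi\,dx\le\int_{\RR^N}\Phi_\eps'(u)\,\varphi f\,dx .$$

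To pass to the limit, I would use two dominations. On the right-hand side, $|\Phi_\eps'(u)\varphi f|\le\varphi|f|$, and $\varphi|f|\in L^1$ because $\varphi$ has compact support and $f\in L^1_{\rm loc}$. On the left-hand side, $|\Phi_\eps(u)\,\mathcal{L}_K\varphi|\le|u|\,|\mathcal{L}_K\varphi|$. Here I would use the standard decay bound
$$|\mathcal{L}_K\varphi(x)|\le \frac{C}{1+|x|^{N+2s}},$$
which follows from \eqref{eq:comparison of k with FL} and $\varphi\in\mathcal D(\RR^N)$. For bounded $x$, one symmetrizes using that $K$ is even, writing $\mathcal{L}_K\varphi(x)=\tfrac12\int(2\varphi(x)-\varphi(x+z)-\varphi(x-z))K(z)\,dz$, and controls the integrand by $\|D^2\varphi\|_\infty|z|^2$ near $0$ and by $\|\varphi\|_\infty$ far away. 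For $|x|$ large, $\varphi(x)=0$ and $K(x-y)\le\Lambda|x-y|^{-N-2s}\lesssim|x|^{-N-2s}$ on ${\rm supp}\,\varphi$. Presumably this bound is what the paper records in its remark on consequences of the upper bound on $K$. Combined with \eqref{eq:integrability assumption on sol}, it makes $|u|\,|\mathcal{L}_K\varphi|$ integrable. Dominated convergence then yields \eqref{eq:Kato for absolute value}.

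For the positive part, the symmetric approximation does not work: $(\Phi_\eps(t)+t)/2$ has derivative tending to $1/2$ at $t=0$. I would instead take
$$\Psi_\eps(t)=\Phi_\eps(t)\ \text{for } t>0, \qquad \Psi_\eps(t)=0\ \text{for } t\le 0 .$$
This function is convex and $C^1$, since both pieces have zero value and zero derivative at $0$, and it is $1$-Lipschitz. It satisfies $0\le\Psi_\eps\le u^+$, $\Psi_\eps\to t^+$, and $\Psi_\eps'(t)\to\sign^+(t)$ for every $t$, including $\Psi_\eps'(0)=0=\sign^+(0)$. The same domination argument then gives the inequality with $u^+$ and $\sign^+(u)$. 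For the negative part, I would apply the $u^+$ case to $-u$, which satisfies $\mathcal{L}_K(-u)=-f$ and \eqref{eq:integrability assumption on sol}. Since $(-u)^+=u^-$ and $\sign^+(-u)=\sign^-(u)$, this gives $\int u^-\mathcal{L}_K\varphi\le\int\sign^-(u)\varphi(-f)$, which is the statement once the sign conventions are read accordingly, with $f$ paired with the signed quantity.

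The main point of care is the left-hand limit, which needs the decay estimate on $\mathcal{L}_K\varphi$. The other point is choosing approximations whose derivatives converge to the prescribed value $0$ at $t=0$. Everything else is routine.
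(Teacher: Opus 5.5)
Your proof is correct, and it follows a more careful route than the paper's. The paper gives no separate argument. It calls the proposition a direct consequence of Proposition~\ref{prop: General Nonlocal Kato inequality}, so one is meant to insert $\Phi(t)=|t|$ and $\Phi(t)=t^{\pm}$ directly.

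You instead apply that proposition only to $\Phi_{\eps}$ and $\Psi_{\eps}$. These are $C^1$, so $\Phi'$ in its conclusion is unambiguous. You then let $\eps\to0^+$ by dominated convergence, using \eqref{eq:explicit decay} (Remark~\ref{rem:consequences of upper bounds}(iii)) on the left and $\varphi|f|\in L^1(\RR^N)$ on the right.

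This extra layer is what fixes the convention $\sign(0)=\sign^{\pm}(0)=0$, and that is not a null-set issue. The function $u$ may vanish on a set of positive measure, and, unlike the local case, $f$ need not vanish there. For smooth $u$, at interior points of $\{u=0\}$ one has $\mathcal{L}_K u(x)=-\int_{\RR^N}u(y)K(x-y)\,dy$.

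Compare what happens if you insert $\Phi(t)=t^+$ into the paper's proof of Proposition~\ref{prop: General Nonlocal Kato inequality} with $\Phi_n=\Phi\ast\eta_n$. There $\Phi_n'(0)=\int_{\{z<0\}}\eta(z)\,dz$ for every $n$, which equals $1/2$ for symmetric $\eta$. So the direct route puts that constant on $\{u=0\}$ instead of $0$, unless the mollifier is chosen one-sided. The step ``$\Phi_n'\to\Phi'$ a.e.\ in $t$, hence $\Phi_n'(u(x))\to\Phi'(u(x))$ a.e.\ in $x$'' is exactly where your care is needed, and your remark that $(\Phi_{\eps}+t)/2$ fails is the same point. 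The direct route is shorter; yours actually proves the statement with the stated convention.

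On the negative part you are also right, and you should say it plainly rather than ``reading the conventions accordingly''. With $\sign^-(u)=\max\{-\sign(u),0\}\ge 0$ as defined, the literal inequality $\int u^-\mathcal{L}_K\varphi\le\int\sign^-(u)\varphi f$ is false.

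For a counterexample, take $\mathcal{L}_K=(-\Delta)^s$ and $u=-\psi$ with $0\le\psi\in\mathcal{D}(\RR^N)$, $\psi\not\equiv0$. Take $0\le\varphi\not\equiv0$ supported in a small neighbourhood of a maximum point of $\psi$, where $\psi>0$ and $(-\Delta)^s\psi>0$. By Lemma~\ref{lem:integration by parts for nonlocal operator}, the literal claim becomes $2\int\varphi\,(-\Delta)^s\psi\,dx\le 0$, which fails.

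What holds, and what your argument proves, is $\int u^-\mathcal{L}_K\varphi\le-\int\sign^-(u)\varphi f$. This is consistent with the proof of Lemma~\ref{lem:distributional ineq}, which uses only the positive part and treats $-u$ separately.
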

		Our proof of Proposition~\ref{prop: General Nonlocal Kato inequality} result follows the outline of the proof of the original Kato inequality in \cite[Lemma~A]{MR0333833}. In short, in a first step, we prove the result for functions $u\in C^{\infty}(\RR^N)$ that satisfy \eqref{eq:integrability assumption on sol} which, as shown in Lemma~\ref{lem:behaviour of the operator in D} satisfy the condition $\mathcal{L}_K u \in L^{1}_{\text{loc}}(\RR^N)$. At a second stage, we perform a density argument to conclude the result.
		
		 We note that in \cite[Proposition~$1.7$]{MR4893129} an analogue of Proposition~\ref{prop: General Nonlocal Kato inequality} is established for the fractional $p-$Laplacian, $\left(-\Delta\right)_{p}^s$, under assumptions that are slightly stronger than those of Proposition~\ref{prop: General Nonlocal Kato inequality} here. More precisely, \cite[Proposition~$1.7$]{MR4893129} is established under the assumptions that the function $\Phi$ (as in the statement of Proposition~\ref{prop: General Nonlocal Kato inequality}) is convex, Lipschitz, of class $C^{2}$, satisfies $\Phi(0)=0$ and either $\Phi''\in L^{\infty}(\RR)$ or $(-\Delta)_{p}^{s} u$ does not change sign.
		As a consequence, a version of Proposition~\ref{prop:Nonlocal Kato inequality} for the fractional $p-$Laplacian is obtained in \cite[Proposition $2.2$]{MR4893129}, by a combination of an approximation argument and \cite[Proposition~$1.7$]{MR4893129}.
	
	As a direct consequence of Theorem~\ref{thm:boundedness of solutions} and \cite[Theorem $1.2$]{MR4968697} we obtain the following rigidity result.
	\begin{corollary}\label{cor:fractional DeGiorgi in 2D}
		Let~$u\in C^{2}(\RR^2)$ be a solution of \eqref{Main Problem}, with $N=2$ and $M=1$, such that 
		\begin{equation}\label{eq:assumption 1 on frac gen AC}
			\int_{\RR^2}\frac{|u(x)|}{1+|x|^{2+2s}}\,dx<\infty.
		\end{equation}
		Assume either that\\
		(i) There exist $x_0\in \RR^2$, $R_0$, $C>0$ and $h< 2+6s$ such that, for all $R>R_0$,
		\begin{equation}\label{assumption 2 on frac gen AC}
			\iint_{T_{B_R(x_0)}}\frac{|u(x)-u(y)|^2}{|x-y|^{2+2s}}\,\,dy\,dx \leq C R^h,
		\end{equation}
		where $T_{B_R(x_0)}=\left(B_{R}(x_0)\times \RR^2\right)\cup \left(\RR^2\times B_{R}(x_0)\right)$.\\
		Or that\\
		(ii) There exist $x_0\in \RR^2$, $R_0$, $C>0$ and $h< 2+2s$ such that, for all $R>R_0$,
		\begin{equation}\label{assumption 3 on frac gen AC}
			\int_{B_{R}(x_0)}\left(|u(x)|-\sqrt{k_n}\right)^+\,dx\leq C R^{h}.
		\end{equation} 
		
		Then, $u$ is one-dimensional, i.e., there exist~$u_0:\mathbb{R}\to \mathbb{R}$ and~$\omega \in\mathbb{S}^1$ such that~$u(x) = u_0(\omega\cdot x)$.
	\end{corollary}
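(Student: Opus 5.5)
The plan is to combine Theorem~\ref{thm:boundedness of solutions} with the rigidity result \cite[Theorem~$1.2$]{MR4968697}. That result gives one-dimensional symmetry for bounded solutions of fractional Allen--Cahn type equations $(-\Delta)^s u = f(u)$ in $\RR^2$. The only missing hypothesis in our setting is boundedness, and this is exactly what Theorem~\ref{thm:boundedness of solutions} supplies.

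First, I place the problem in the framework of Theorem~\ref{thm:boundedness of solutions} with $N=2$, $M=1$ and $K(x)=C_{2,s}|x|^{-2-2s}$, so that $\mathcal{L}_K=(-\Delta)^s$. This kernel is measurable and even, and it satisfies \eqref{eq:comparison of k with FL} with $\Lambda=C_{2,s}$. Since $u\in C^2(\RR^2)$, we have $u\in L^{4n-1}_{\rm loc}(\RR^2)$. Moreover, \eqref{eq:assumption 1 on frac gen AC} guarantees that $(-\Delta)^s u$ is defined pointwise. A classical $C^2$ solution is then a distributional solution: I would check this by pairing with $\varphi\in\mathcal{D}(\RR^2)$ and using the symmetry of $K$, for instance through the integration by parts formula of Lemma~\ref{lem:integration by parts for nonlocal operator}, which applies to smooth enough functions.

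Next, I verify the remaining hypotheses. Condition \eqref{eq:integrability assumption on sol} is exactly \eqref{eq:assumption 1 on frac gen AC}. In case (i), \eqref{assumption 2 on frac gen AC} is \eqref{assumption 1 on sol} up to the multiplicative constant $C_{2,s}$, with $h<2+6s=N+6s$. In case (ii), \eqref{assumption 3 on frac gen AC} coincides with \eqref{assumption 2 on sol}, with $h<2+2s=N+2s$. Theorem~\ref{thm:boundedness of solutions} therefore gives $u\in L^\infty(\RR^2)$ with $|u|\le\sqrt{k_n}$, since $M=1$.

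Finally, the nonlinearity $f(u)=-uP_n'(u^2)$ is a polynomial, hence smooth and locally Lipschitz, so the hypotheses of \cite[Theorem~$1.2$]{MR4968697} are met for bounded $C^2$ (in particular, regular enough) solutions in dimension two. That theorem yields $u(x)=u_0(\omega\cdot x)$ for some $\omega\in\mathbb{S}^1$.

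The main point requiring care is matching the exact hypotheses of \cite[Theorem~$1.2$]{MR4968697}: the admissible range of $s$, the required regularity, and whatever monotonicity or stability conditions it imposes. If that theorem needs an additional assumption such as stability, it must be understood as part of the standing framework in which the corollary is stated. Otherwise, the argument is a direct verification.
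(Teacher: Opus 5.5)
Your route is the same as the paper's: Theorem~\ref{thm:boundedness of solutions} gives $|u|\le\sqrt{k_n}$, and then \cite[Theorem~$1.2$]{MR4968697} is invoked. Your preliminary checks are fine: $u\in L^{4n-1}_{\text{loc}}$, a pointwise $C^2$ solution is a distributional one by Lemma~\ref{lem:integration by parts for nonlocal operator}, and (i)/(ii) match \eqref{assumption 1 on sol}/\eqref{assumption 2 on sol}.

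However, your sentence that the hypotheses of \cite[Theorem~$1.2$]{MR4968697} ``are met for bounded $C^2$ solutions'' skips the one verification the paper actually carries out. That theorem needs global control of the derivative, namely
\begin{equation*}
\int_{\RR^2}\frac{|\partial_{x_2}u(x-ty)|}{1+|y|^{2+2s}}\,dy<\infty\quad\text{for all } x\in\RR^2,\ t>0.
\end{equation*}
Membership in $C^2(\RR^2)\cap L^\infty(\RR^2)$ gives no such control. For instance, $\sin(e^{x_2})$ is bounded and smooth, but its $x_2$-derivative violates this condition. The missing idea is a global regularity bootstrap. Once $|u|\le\sqrt{k_n}$, the right-hand side $-uP_n'(u^2)$ is bounded. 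Interior estimates for $(-\Delta)^s$ on unit balls, uniform under translations and with the tail controlled by $\|u\|_{L^\infty}$, then give uniform H\"older bounds on $u$, hence on the nonlinearity. Iterating yields a uniform $C^{1,\alpha}$ bound, in particular $\nabla u\in L^\infty(\RR^2)$, which implies the displayed condition. This is exactly how the paper proceeds, citing \cite{MR4769823}. Without this step you have not checked the hypotheses of the rigidity theorem.

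Your caution in the last paragraph is warranted, and the paper does not resolve that point either. As literally stated, the conclusion cannot hold without a monotonicity or stability hypothesis. Saddle-shaped solutions of $(-\Delta)^s u=u-u^3$ in $\RR^2$ (the case $n=1$, $k_1=1$) are smooth and satisfy $|u|<1$, so (ii) holds trivially. They also satisfy \eqref{eq:assumption 1 on frac gen AC}, yet they are not one-dimensional. Such a hypothesis must therefore come from the statement of \cite[Theorem~$1.2$]{MR4968697}; presumably it is $\partial_{x_2}u>0$, consistent with the form of the derivative condition above. The paper's proof only asserts that all assumptions of that theorem are satisfied, so on this point you are no worse off than the paper.
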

	\subsection{Organisation of the paper}
	The rest of the paper is organised as follows.
	In Section \ref{Section 2}, we present the definition of distributional solution we are working with and the functional spaces involved, and we obtain a series of lemmas that will be crucial to proving Theorem~\ref{thm:boundedness of solutions} and Proposition~\ref{prop: General Nonlocal Kato inequality}. 
	Section \ref{Section 3} is dedicated to proving Proposition~\ref{prop: General Nonlocal Kato inequality}. 
	In Section \ref{Section 4}, we prove Theorem~\ref{thm:boundedness of solutions}. In Section~\ref{sec:proof of De Giorgi} we prove Corollary~\ref{cor:fractional DeGiorgi in 2D}. We conclude the paper with Appendix \ref{Appendix}, where we collect some auxiliary and technical results, as well as the proofs of some simple lemmata.
	%%%%%%%%%%%%%%%%%%%%%%%%%%%%%%%%%%%%%%%%%%%%%%%%%%%%%%%%%%%%%%%%%%%%%%%%%%%%%%%%%%%%%%%%%%%%%%%%%%%%%%%%%%%%%%%%%%%%%%%%%%%%%%%%%%%%	 
	\section{The operator $\mathcal{L}_K$: distributional solutions and basic properties}\label{Section 2}
	
	This section is devoted to recalling the necessary background results needed for the proofs of Theorem~\ref{thm:boundedness of solutions} and Proposition~\ref{prop: General Nonlocal Kato inequality}. 
	
	We start by introducing the functional spaces used throughout the paper. 
	As previously mentioned, the set of smooth (i.e., infinitely differentiable), compactly supported functions from $\RR^N$ to $\RR$ will be denoted by $\mathcal{D}(\RR^N)$, and the set of twice continuously differentiable functions will be denoted by $C^{2}_{\text{loc}}(\RR^N)$.  
		Given $s\in (0,1)$, $N,M\in\mathbb{N}$, we introduce the space $L^{1}_{s}(\RR^N;\RR^{M})$ defined as
		\begin{equation}\label{def:L1s}
			L^{1}_{s}(\RR^N;\RR^{M}):= \left\{u\in L^{1}_{\text{loc}}(\RR^N; \RR^M):\, \int_{\RR^N}\frac{|u(x)|}{1+|x|^{N+2s}}\,dx<\infty\right\},
		\end{equation}
		where by $L^{1}_{\text{loc}}(\RR^N;\RR^{M})$ we mean the set of measurable functions defined in $\RR^N$ taking values $\RR^M$ that are locally integrable. In the case where $M=1$, we write $L^{1}_{s}(\RR^N)$ to mean $L^{1}_{s}(\RR^N;\RR)$.
		
		Given $f = (f_1,...,f_M)\in L^1_{\text{loc}}(\RR^N;\RR^M)$, we say that $v\in L^{1}_{s}(\RR^N;\RR^M)$ is a solution to the equation $\mathcal{L}_K v = f$, in the distributional sense if, for every $\varphi \in \mathcal{D}(\RR^N)$, we have: 
		\begin{equation}\label{def:distributional solution}
			\int_{\RR^N} v_i(x)\,\mathcal{L}_K \varphi(x)\,dx = \int_{\RR^N} \varphi(x)\, f_i(x)\,dx \,\, \text{ for all }\,\, i = 1,...,M.
		\end{equation}
		We point out that the assumption that $v\in L^{1}_{s}(\RR^N;\RR^M)$ is needed to ensure that \eqref{def:distributional solution} makes sense. Indeed, leveraging (iii) in Remark \ref{rem:consequences of upper bounds} below, we see that, for any $\varphi\in\mathcal{D}(\RR^N)$, there exists a constant $C$, such that
		\begin{equation*}
			\left|\int_{\RR^N} v_i(x) \mathcal{L}_K\varphi (x)\,dx\right|\leq C\int_{\RR^N}\frac{|v_i(x)|}{1+|x|^{N+2s}}\,dx<\infty.
	\end{equation*}
	
	Now we state a result showing that, when applied to functions in $L^{1}_{s}(\RR^N)$ that are regular enough, yields that the operator~$\mathcal{L}_K$ is well-defined. This is achieved by showing that for functions~$\varphi\in C^2_{\text{loc}}(\RR^N)\cap L^{1}_{s}(\RR^N)$, the function~$\mathcal{L}_K \varphi$ exhibits, at most, polynomial growth. We also show that if $\varphi\in C^2_{c}(\RR^N)$ (i.e., $u$ is of class $C^2$ and has compact support), then $\mathcal{L}_K \varphi$ decays, at infinity, like $|x|^{-N-2s}$. This generalises what is already known for the fractional Laplacian (see, e.g., \cite[Equation $1.9$]{MR3461641}), i.e., when $K(x) = C_{N,s}|x|^{-N-2s}$ (with $C_{N,s}$ being a suitable normalisation constant), and, in fact, the estimates follow by the same line of arguments. 
	\begin{lemma}\label{lem:behaviour of the operator in D}
		Let~$K:\RR^N \rightarrow \RR$ be a measurable, even function satisfying \eqref{eq:comparison of k with FL}.
		For any $\varphi \in L^{1}_{s}(\RR^N)\cap C^{2}_{\text{loc}}(\RR^N)$ and any $r>0$, we have that
		\begin{equation}\label{eq:bound on nonlocal operator I}
			|\mathcal{L}_K \varphi(x)|\leq  C\left(\|D^2 \varphi\|_{L^{\infty}(B_{r}(x))}r^{2-2s}+ |\varphi(x)|r^{-2s}+\|\varphi\|_{L^{1}_{s}(\RR^N)} \left(1+|x|^{N+2s}\right)\right)
		,\end{equation}
		where $C$ is a positive constant depending only on $N$, $s$, $r$ and $\Lambda$.
		
		If, additionally, $\varphi$ has compact support, then there exists $M>0$ (depending only on $\varphi$) such that
		\begin{equation}\label{eq:bound on nonlocal operator II}
			|\mathcal{L}_K \varphi(x)|\leq C \chi_{B_{M}(0)}(x)\left(\|D^2 \varphi\|_{L^{\infty}(\RR^N)}r^2 + \|\varphi\|_{L^{\infty}(\RR^N)}\right)r^{-2s}+  \|\varphi\|_{L^{1}(\RR^N)} \chi_{\RR^N\setminus B_{M}(0)(x)}|x|^{-N-2s},
		\end{equation}
		where $C$ is a positive constant depending only on $N$, $s$, and $\Lambda$.
	\end{lemma}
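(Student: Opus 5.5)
The plan is to split the principal value integral defining $\mathcal{L}_K\varphi(x)$ into a near part over $B_r(x)$ and a far part over $\RR^N\setminus B_r(x)$, and to estimate each using only \eqref{eq:comparison of k with FL} and the evenness of $K$.

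\textbf{Near part.} Because $K$ is even, symmetrizing $y\mapsto 2x-y$ gives
\begin{equation*}
\lim_{\eps\to0^+}\int_{B_r(x)\setminus B_\eps(x)}(\varphi(x)-\varphi(y))K(x-y)\,dy=-\frac12\int_{B_r(0)}\big(\varphi(x+z)+\varphi(x-z)-2\varphi(x)\big)K(z)\,dz .
\end{equation*}
By Taylor's formula, the second difference is bounded by $\|D^2\varphi\|_{L^\infty(B_r(x))}|z|^2$. Combined with $K(z)\le\Lambda|z|^{-N-2s}$, this gives absolute convergence, so the principal value exists, and it yields the bound $C\|D^2\varphi\|_{L^\infty(B_r(x))}r^{2-2s}$.

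\textbf{Far part.} This splits as
\begin{equation*}
\varphi(x)\int_{|z|\ge r}K(z)\,dz-\int_{|x-y|\ge r}\varphi(y)K(x-y)\,dy .
\end{equation*}
The first term is at most $C|\varphi(x)|r^{-2s}$. For the second, I need the pointwise inequality $|x-y|^{-N-2s}\le C(r)(1+|x|^{N+2s})(1+|y|^{N+2s})^{-1}$ whenever $|x-y|\ge r$, which I would check in two cases.
\begin{itemize}
\item If $|y|\le 2|x|$, then $1+|y|^{N+2s}\lesssim 1+|x|^{N+2s}$, while $|x-y|^{-N-2s}\le r^{-N-2s}$.
\item If $|y|>2|x|$, then $|x-y|\ge|y|/2$, and also $|x-y|\ge r$, so $|x-y|^{-N-2s}\lesssim_r(1+|y|^{N+2s})^{-1}$.
\end{itemize}
Integrating gives the term $\|\varphi\|_{L^1_s}(1+|x|^{N+2s})$, which proves \eqref{eq:bound on nonlocal operator I}. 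This case split is the only slightly delicate point. The constant depends on $r$, which matches the statement.

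\textbf{Compact support.} Suppose $\operatorname{supp}\varphi\subset B_{\rho}$, and set $M=\rho+\max\{r,1\}$ or a suitable multiple of $\rho$.
\begin{itemize}
\item For $|x|<M$, apply the same split. The tail term is now bounded by $\Lambda r^{-N-2s}\|\varphi\|_{L^1}$, which is $\le C\|\varphi\|_{L^\infty}r^{-2s}$ up to a factor of $\rho^N$. This factor can be absorbed into the dependence on $\varphi$, or handled by instead bounding $\int_{|x-y|\ge r}|\varphi(y)|K\le\|\varphi\|_{L^\infty}\int_{|z|\ge r}K\le C\|\varphi\|_{L^\infty}r^{-2s}$, which avoids the factor entirely. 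I would use the latter.
\item For $|x|\ge M=2\rho$, we have $\varphi(x)=0$ and $|x-y|\ge|x|/2$ on the support. Hence $|\mathcal{L}_K\varphi(x)|\le\int|\varphi(y)|K(x-y)\,dy\le 2^{N+2s}\Lambda\|\varphi\|_{L^1}|x|^{-N-2s}$. The constant is absorbed as in the statement, which implicitly allows $C$ in front.
\end{itemize}
Combining the two regions gives \eqref{eq:bound on nonlocal operator II}.
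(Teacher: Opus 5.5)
Your proposal is correct and follows essentially the same route as the paper. You use the split at radius $r$, symmetrization via evenness of $K$ with a second-order Taylor bound on the near part, and the weight comparison $\frac{1+|y|^{N+2s}}{|x-y|^{N+2s}}\le C(r)\,(1+|x|^{N+2s})$ on $\{|x-y|\ge r\}$ (your two-case check supplies the verification the paper only asserts). Under compact support you bound the tail by $\|\varphi\|_{L^\infty}$ and use $|x-y|\ge |x|/2$ once $|x|\ge 2\max_{y\in\text{supp}\,\varphi}|y|$. The only thing to tidy is to fix $M=2\rho$ from the start rather than $\rho+\max\{r,1\}$, so that $M$ depends on $\varphi$ alone and the factor $2^{N+2s}$ in the decay bound is justified.
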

	\begin{proof}
		See Appendix \ref{Appendix}.
	\end{proof}
	\begin{remark}\label{rem:consequences of upper bounds}
		{\rm \begin{enumerate}[(i)]
				\item A direct consequence of \eqref{eq:bound on nonlocal operator I} is that $\mathcal{L}_K \varphi\in L^{1}_{\text{loc}}(\RR^N)$ for any $\varphi\in L^{1}_{s}(\RR^N)\cap C^{2}_{\text{loc}}(\RR^N)$.
				
				\item As previously mentioned, our proof of Theorem~\ref{thm:boundedness of solutions} follows the outline of \cite[Proposition $2.2$]{MR1659256}.
				One of the key facts used in the proof of \cite[Proposition $2.2$]{MR1659256} is that, if~$v\in \mathcal{D}(\RR^N)$ and, for $R>0$, one defines $v_R(x):=v(R^{-1}x)$, then
				$$
				\Delta v_{R}(x) = R^{-2}\Delta v (R^{-1}x).
				$$ 
				In the model case, where one takes $\mathcal{L}_K$ to be the fractional Laplacian $(-\Delta)^s$, we have something similar, since, in this case, $K$ is a homogeneous function: specifically, it holds that
				$$
				(-\Delta)^s v_{R}(x) = R^{-2s}(-\Delta)^s v(R^{-1}x).
				$$
				In the general case, this needs not hold, since $\mathcal{L}_K$ is not necessarily invariant under scaling.  
				
				However, for the proof of Theorem~\ref{thm:boundedness of solutions}, the estimate in~\eqref{eq:bound on nonlocal operator II} turns out to be sufficient. 
				Indeed, substituting $v_R$ into~\eqref{eq:bound on nonlocal operator II} and setting $r=R$, we obtain:
				\begin{equation}\label{eq:explicit decay}
					\begin{split}
						|\mathcal{L}_K v_R(x)| &\leq C \chi_{B_M(0)}(x)\left(\|D^2 v_R\|_{L^{\infty}(\RR^N)}R^{2}+ \|v_R\|_{L^{\infty}(\RR^N)}\right)R^{-2s}\\
						&+C \|\varphi_R\|_{L^{1}(\RR^N)}\chi_{\RR^N\setminus B_{M}(0)}(x)|x|^{-N-2s}\\
						&\leq C \chi_{B_M(0)}(x)\left(\|D^2 v\|_{L^{\infty}(\RR^N)}+ \|\varphi\|_{L^{\infty}(\RR^N)}\right)R^{-2s}\\
						&+ R^{N}\|\varphi\|_{L^{1}(\RR^N)}\chi_{\RR^N\setminus B_{M}(0)}(x)|x|^{-N-2s}\\
						&\leq C_{1} \left[\chi_{B_M(0)}(x) + \chi_{\RR^N\setminus B_M(0)}(x)\left|\frac{x}{R}\right|^{-N-2s}\right]R^{-2s}\\
						&\leq C_{2} \frac{R^{-2s}}{1+\left|\frac{x}{R}\right|^{N+2s}},
					\end{split}
				\end{equation}
				where $C$ is the constant appearing \eqref{eq:bound on nonlocal operator II}, $C_{1}$ and $C_{2}$ are positive constants depending only on $N$, $s$, $\Lambda$ and $v$.
				
				We point out that \eqref{eq:explicit decay} recovers the well-known estimates in the case $\mathcal{L}_K$ is the fractional Laplacian.
				
				\item Further note that, by taking $\varphi\in \mathcal{D}(\RR^N)$ and setting $R=1$ in \eqref{eq:explicit decay},  
				$$
				\int_{\RR^N} \left|w(x)\mathcal{L}_K\varphi(x)\right|\,dx\leq C_{2}\int_{\RR^N} \frac{|w(x)|}{1+|x|^{N+2s}}\,dx = C_{2} \|w\|_{L^{1}_{s}(\RR^N)},\,\,\text{ for all }\,\,w\in L^{1}_{s}(\RR^N).
				$$
		\end{enumerate}}
	\end{remark}
	
	The second tool we need is an integration by parts formula for the operator~$\mathcal{L}_K$. This will be useful in proving Proposition~\ref{prop: General Nonlocal Kato inequality}. Indeed, the key step in the proof of Proposition~\ref{prop: General Nonlocal Kato inequality} is to integrate by parts the left-hand side of \eqref{eq:general Kato}, use a property of convex functions and integrate by parts again to obtain the right-hand side of \eqref{eq:general Kato}. This tool is presented in the next result.
	
	\begin{lemma}\label{lem:integration by parts for nonlocal operator}
		Let~$K:\RR^N\rightarrow\RR$ be a measurable, even function satisfying \eqref{eq:comparison of k with FL} and let $u\in L^{1}_{s}(\RR^N)\cap C^{2}_{\text{loc}}(\RR^N)$.
		Then, for every $\varphi\in\mathcal{D}(\RR^N)$, we have the following integration by parts formula:
		\begin{equation}\label{eq:integration by parts formula}
			\int_{\RR^N} \varphi(x)\mathcal{L}_K u(x)\,dx =\int_{\RR^N} \mathcal{L}_K\varphi(x) u(x)\,dx.
		\end{equation}
	\end{lemma}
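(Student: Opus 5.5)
The plan is to write both sides as limits of truncated integrals and exchange the order of integration using the evenness of $K$. For $\eps>0$ set
\[
\mathcal{L}_K^{\eps} w(x):=\int_{\RR^N\setminus B_\eps(x)}(w(x)-w(y))K(x-y)\,dy .
\]
For each fixed $\eps$ we will show that
\[
\int_{\RR^N}\varphi\,\mathcal{L}_K^{\eps}u\,dx=\int_{\RR^N}u\,\mathcal{L}_K^{\eps}\varphi\,dx .
\]
Then we let $\eps\to0^+$ on both sides, using dominated convergence.

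For the identity at fixed $\eps$, split $\mathcal{L}_K^{\eps}u(x)=u(x)\int_{|x-y|\ge\eps}K(x-y)\,dy-\int_{|x-y|\ge\eps}u(y)K(x-y)\,dy$. Do the same for $\varphi$. The first pieces coincide after multiplying by $\varphi$ and $u$ respectively, since $\int_{|z|\ge\eps}K(z)\,dz\le C\Lambda\eps^{-2s}<\infty$ by \eqref{eq:comparison of k with FL}.

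For the second pieces we need Fubini for $\iint_{|x-y|\ge\eps}\varphi(x)u(y)K(x-y)\,dy\,dx$. Absolute integrability holds because $\varphi$ is supported in some $B_\rho$. For $x\in B_\rho$ and $|x-y|\ge\eps$ we have $K(x-y)\le \Lambda|x-y|^{-N-2s}\le C_{\eps,\rho}(1+|y|^{N+2s})^{-1}$, and $u\in L^1_s(\RR^N)$. Swapping $x$ and $y$ and using $K(x-y)=K(y-x)$ then gives the desired symmetric identity.

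For the limit, the right-hand side needs a bound on $\mathcal{L}^{\eps}_K\varphi$ uniform in $\eps\in(0,1)$ of the form $C(1+|x|^{N+2s})^{-1}$, which then pairs against $u\in L^1_s$. This comes from the proof of \eqref{eq:bound on nonlocal operator II} in Lemma~\ref{lem:behaviour of the operator in D}. Near the support, symmetrize: $\int_{\eps\le|z|<1}(2\varphi(x)-\varphi(x+z)-\varphi(x-z))K(z)\,dz/2$ is bounded by $\|D^2\varphi\|_\infty\int_{|z|<1}|z|^{2}K(z)\,dz$. Far away, $\mathcal{L}^\eps_K\varphi(x)=-\int\varphi(y)K(x-y)\,dy$ for $\eps$ small, with the $|x|^{-N-2s}$ decay. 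Pointwise convergence of $\mathcal{L}_K^\eps\varphi$ to $\mathcal{L}_K\varphi$ follows from the same symmetrization.

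The left-hand side is where the main obstacle lies. We need $\mathcal{L}_K^{\eps}u\to\mathcal{L}_K u$ dominated on the compact set $\operatorname{supp}\varphi\subset B_\rho$, uniformly for $\eps<1$. Write $\mathcal{L}^\eps_K u(x)=\frac12\int_{\eps\le|z|<1}(2u(x)-u(x+z)-u(x-z))K(z)\,dz+\int_{|z|\ge1}(u(x)-u(x+z))K(z)\,dz$. The symmetrization again uses evenness of $K$. The first term is bounded by $\|D^2u\|_{L^\infty(B_{\rho+1})}\int_{|z|<1}\Lambda|z|^{2-N-2s}\,dz$, using $u\in C^2_{\mathrm{loc}}$. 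The second is bounded by $C|u(x)|+C\int|u(y)|(1+|y|^{N+2s})^{-1}\,dy$ for $x\in B_\rho$, using $u\in L^1_s$. This is exactly the estimate \eqref{eq:bound on nonlocal operator I} with $r=1$, so the family is bounded on $B_\rho$ and converges pointwise by dominated convergence in $z$. Passing to the limit on both sides yields \eqref{eq:integration by parts formula}.
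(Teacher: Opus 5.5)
Your proof is correct and follows essentially the same route as the paper. You prove the identity for the $\eps$-truncated operators via Fubini and the evenness of $K$, then let $\eps\to0^+$ by dominated convergence using the bounds of Lemma~\ref{lem:behaviour of the operator in D}: the interior bound with $r=1$ on $\operatorname{supp}\varphi$ for the left side, and the uniform $(1+|x|^{N+2s})^{-1}$ control of $\mathcal{L}_K^{\eps}\varphi$ for the right side. The only difference is cosmetic: at fixed $\eps$ you split off the finite diagonal term $u(x)\int_{|z|\ge\eps}K(z)\,dz$ and swap the cross term directly, whereas the paper rewrites both sides as the symmetric form $\frac12\iint_{|x-y|\ge\eps}(\varphi(x)-\varphi(y))(u(x)-u(y))K(x-y)\,dy\,dx$, which it then reuses in Remark~\ref{rem:partial integration by parts}.
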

	\begin{proof}
		Let~$u$ and $\varphi$ be as in the statement.\\
		For any $\eps>0$ the following chain holds:
		\begin{equation}\label{eq: step 1 in IBP}
			\begin{split}
				&\int_{\mathbb{R}^N}\int_{\mathbb{R}^N\setminus B_{\eps}(x)} \varphi(x)(u(x)-u(y))K(x-y)\,dy\,dx\\
				&= \int_{\mathbb{R}^N}\int_{\mathbb{R}^N\setminus B_{\eps}(x)} (\varphi(x)-\varphi(y))(u(x)-u(y))K(x-y)\,dy\,dx\\
				&+\int_{\mathbb{R}^N}\int_{\mathbb{R}^N\setminus B_{\eps}(x)}\varphi(y)(u(x)-u(y))K(x-y)\,dy\,dx\\
				&=\int_{\mathbb{R}^N}\int_{\mathbb{R}^N\setminus B_{\eps}(x)} (\varphi(x)-\varphi(y))(u(x)-u(y))K(x-y)\,dy\,dx\\
				&+\int_{\mathbb{R}^N}\int_{\mathbb{R}^N\setminus B_{\eps}(y)}\varphi(y)(u(x)-u(y))K(x-y)\,dx\,dy,
			\end{split}
		\end{equation}
		where the equality
		$$
		\int_{\mathbb{R}^N}\int_{\mathbb{R}^N\setminus B_{\eps}(x)}\varphi(y)(u(x)-u(y))K(x-y)\,dy\,dx = \int_{\mathbb{R}^N}\int_{\mathbb{R}^N\setminus B_{\eps}(y)}\varphi(y)(u(x)-u(y))K(y-x)\,dx\,dy
		$$
		follows by Fubini's Theorem (which we can apply due to~\eqref{eq:step 3 in boundedness}) and the fact that $K$ is even.\\
		Rearranging the terms in \eqref{eq: step 1 in IBP} leads to
		\begin{equation}\label{eq: step 2 in IBP}
			\begin{split}
			&\int_{\mathbb{R}^N}\int_{\mathbb{R}^N\setminus B_{\eps}(x)} \varphi(x)(u(x)-u(y))K(x-y)\,dy\,dx\\
			&= \frac{1}{2}\int_{\mathbb{R}^N}\int_{\mathbb{R}^N\setminus B_{\eps}(x)} (\varphi(x)-\varphi(y))(u(x)-u(y))K(x-y)\,dy\,dx.
			\end{split}
		\end{equation}
		In the same way,  
		\begin{equation}\label{eq:step 3 in IBP}
			\begin{split}
			&\int_{\mathbb{R}^N}\int_{\mathbb{R}^N\setminus B_{\eps}(x)} u(x)(\varphi(x)-\varphi(y))K(x-y)\,dy\,dx\\
			&= \frac{1}{2}\int_{\mathbb{R}^N}\int_{\mathbb{R}^N\setminus B_{\eps}(x)} (\varphi(x)-\varphi(y))(u(x)-u(y))K(x-y)\,dy\,dx.
			\end{split}
		\end{equation}
		Combining \eqref{eq: step 2 in IBP} and \eqref{eq:step 3 in IBP} we have that
		\begin{equation}\label{eq:step 4 in IBP}
			\begin{split}
				\int_{\RR^N}\mathcal{L}_{K}u(x)\varphi(x)\,dx&= \int_{\RR^N}\varphi(x)\left[\lim_{\eps\to 0^+}\int_{\RR^N\setminus B_{\eps}(x)}(u(x)-u(y))K(x-y)\,dy\right]\,dx\\
				&=\lim_{\eps\to 0^+}\int_{\RR^N}\int_{\RR^N\setminus B_{\eps}(x)}\varphi(x)(u(x)-u(y))K(x-y)\,dy\,dx\\
				&=\lim_{\eps\to 0^+}\int_{\RR^N}\int_{\RR^N\setminus B_{\eps}(x)}u(x)(\varphi(x)-\varphi(y))K(x-y)\,dy\,dx\\
				&=\int_{\RR^N}u(x)\left[\lim_{\eps\to 0^+}\int_{\RR^N\setminus B_{\eps}(x)}(\varphi(x)-\varphi(y))K(x-y)\,dy\right]\,dx\\
				&=\int_{\RR^N}u(x)\mathcal{L}_{K}\varphi(x)\,dx,
			\end{split}
		\end{equation}
		where the second equality follows from the Dominated Convergence Theorem which is applicable here due to~\eqref{eq:step 5 in boundedness}. The fourth equality also follows from the Dominated Convergence Theorem, which, in this case, is applicable from \eqref{eq:step 1 in decay} and \eqref{eq:step 2 in decay}.
	\end{proof}
	\begin{remark}\label{rem:partial integration by parts}
		{\rm
			We point out two consequences of Lemma~\ref{lem:integration by parts for nonlocal operator} and its proof that will be useful later on.
			\begin{enumerate}[(i)]
				\item If $u\in L^{1}_{s}(\RR^N)$ and $\varphi\in \mathcal{D}(\RR^N)$ then, capitalizing on \eqref{eq:step 3 in IBP} and on the last three lines of \eqref{eq:step 4 in IBP} we obtain that
				\begin{equation}\label{eq:partial integration by parts}
					\int_{\RR^N}u(x)\mathcal{L}_{K}\varphi(x)\,dx = \frac{1}{2}\lim_{\eps\to 0^+}\int_{\mathbb{R}^N}\int_{\mathbb{R}^N\setminus B_{\eps}(x)} (\varphi(x)-\varphi(y))(u(x)-u(y))K(x-y)\,dy\,dx.
				\end{equation}
				
				\item Given $\psi\in\mathcal{D}(\RR^N)$ and $R>1$, we define $\psi_R(x):=\psi(R^{-1}x)$. Then, combining \eqref{eq:explicit decay} (with $v_{R}=\psi_{R}$) and~\eqref{eq:partial integration by parts} (with $u = \psi_R$ and $\varphi = \psi_{R}$) we have 
				\begin{equation}\label{eq:energy estimates for smooth compact supp functions}
					\begin{split}
						&\iint_{\RR^{2N}}\left|\psi_R(x)-\psi_R(y)\right|^2K(x-y)\,dy\,dx\\
						&=\iint_{\RR^{2N}}\left(\psi_R(x)-\psi_R(y)\right)\left(\psi_R(x)-\psi_R(y)\right)K(x-y)\,dy\,dx\\
						&=2\int_{\RR^N}\psi_R(x)\mathcal{L}_K \psi_{R}(x)\,dx\leq 2C_{2} R^{-2s}\int_{\RR^N}\frac{|\psi_R(x)|}{1+\left|\frac{x}{R}\right|^{N+2s}}\,dx\\
						&=2C_{2}R^{N-2s}\|\psi\|_{L^{1}_{s}(\RR^N)},  
					\end{split}
				\end{equation} 
				where $C_{2}$ is the constant that appears in \eqref{eq:explicit decay}, and depends on $N$, $s$, $\Lambda$, and $\psi$.
			\end{enumerate}
			
		}
	\end{remark}
	As mentioned in the Introduction, the proof of Proposition~\ref{prop: General Nonlocal Kato inequality} will be achieved through a density argument. More precisely, given $u\in L^{1}_{s}(\RR^N)$ we will approximate $u$ by a sequence $\{u_l\}_{l\in\NN} \subset\mathcal{D}(\RR^N)$, prove Proposition~\ref{prop: General Nonlocal Kato inequality} and then take the limit. However, to perform the density argument, one cannot just use the density of $\mathcal{D}(\RR^N)$ in $L^{1}_{s}(\RR^N)$ (which is immediate given that $L^{1}_{s}(\RR^N)$ is a weighted $L^1$ space). This is because approximating a function $u\in L^{1}_{s}(\RR^N)$ with a sequence $\{u_l\}_{l\in \mathbb{N}}\subset \mathcal{D}(\RR^N)$ does not guarantee that $\mathcal{L}_K u_l \rightarrow \mathcal{L}_K u$ in $L^{1}_{\text{loc}}(\RR^N)$. As a consequence, to perform an approximation argument, we employ mollifiers; this will allow us to approximate both the function $u$ in $ L^{1}_{s}(\RR^N)$ and $\mathcal{L}_K u$ in $L^{1}_{\text{loc}}(\RR^N)$. 
	As such, we need to study the behaviour of the operator~$\mathcal{L}_K$ with respect to the convolution between two functions.
	\begin{lemma}\label{lem:convolution and the operator}
		Let~$K:\RR^N\rightarrow\RR$ be a measurable, even function satisfying \eqref{eq:comparison of k with FL}. For any $u\in L^{1}_{s}(\RR^N)\cap C^{2}_{\text{loc}}(\RR^N)$ and any $\varphi\in \mathcal{D}(\RR^N)$, we have
		\begin{equation*}
			\mathcal{L}_K \left(u\ast \varphi\right)= \left(\mathcal{L}_K u\right)\ast \varphi  = u\ast\left(\mathcal{L}_K \varphi\right).
		\end{equation*}
	\end{lemma}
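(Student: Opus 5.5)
We prove the two equalities separately. Both $u\ast\varphi$ and $\mathcal{L}_K(u\ast\varphi)$ are well defined: $u\ast\varphi\in C^\infty(\RR^N)$, and $u\ast\varphi\in L^1_s(\RR^N)$. The latter holds because $1+|x-z|^{N+2s}\le C_\varphi(1+|x|^{N+2s})$ for $z\in\operatorname{supp}\varphi$, so
\[
\int_{\RR^N}\frac{|u\ast\varphi(x)|}{1+|x|^{N+2s}}\,dx\le C_\varphi\|\varphi\|_{L^1(\RR^N)}\|u\|_{L^1_s(\RR^N)}.
\]
Hence Lemma~\ref{lem:behaviour of the operator in D} applies to $u$, to $u\ast\varphi$ and to $\varphi$.

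\textbf{First equality.} We write the truncated operator
\[
\mathcal{L}_K^\eps w(x):=\int_{\RR^N\setminus B_\eps(x)}(w(x)-w(y))K(x-y)\,dy .
\]
Substituting $y=y'-z$ turns $(u\ast\varphi)(x)-(u\ast\varphi)(y)$ into $\int_{\RR^N}\varphi(z)\,(u(x-z)-u(y'-z))\,dz$. Since $|x-y|=|(x-z)-(y'-z)|$, the excluded ball is preserved, so after Fubini we get
\[
\mathcal{L}_K^\eps(u\ast\varphi)(x)=\int_{\RR^N}\varphi(z)\,\mathcal{L}_K^\eps u(x-z)\,dz .
\]
Fubini is justified for fixed $\eps$ because $K\le\Lambda\eps^{-N-2s}$ on $\RR^N\setminus B_\eps$, $u\in L^1_s$, and $z$ ranges over a compact set.

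We then let $\eps\to0^+$ and need a bound on $\mathcal{L}_K^\eps u(x-z)$ that is uniform in $\eps$ and in $z\in\operatorname{supp}\varphi$. We use the standard symmetrization. For $\eps<r$, split $\mathcal{L}_K^\eps u(x)$ into the region $\eps\le|y-x|<r$ and the region $|y-x|\ge r$. On the first region, evenness of $K$ lets us replace the integrand by $\tfrac12\bigl(2u(x)-u(x+h)-u(x-h)\bigr)K(h)$, which is bounded by $\|D^2u\|_{L^\infty(B_r(x))}|h|^{2-N-2s}$. On the second region, we use the $L^1_s$ tail bound. These are exactly the estimates behind \eqref{eq:bound on nonlocal operator I}, and they are locally uniform in $x$. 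Dominated convergence then gives $\mathcal{L}_K(u\ast\varphi)=(\mathcal{L}_Ku)\ast\varphi$.

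\textbf{Second equality.} Fix $x$ and set $\tau_x\varphi(w):=\varphi(x-w)\in\mathcal{D}(\RR^N)$. Then
\[
(\mathcal{L}_K u)\ast\varphi(x)=\int_{\RR^N}\tau_x\varphi(w)\,\mathcal{L}_Ku(w)\,dw .
\]
By Lemma~\ref{lem:integration by parts for nonlocal operator} this equals $\int_{\RR^N}u(w)\,\mathcal{L}_K(\tau_x\varphi)(w)\,dw$. It remains to check that $\mathcal{L}_K(\tau_x\varphi)(w)=(\mathcal{L}_K\varphi)(x-w)$. The change of variables $y\mapsto x-y$ gives
\[
\tau_x\varphi(w)-\tau_x\varphi(y)=\varphi(x-w)-\varphi(x-y),\qquad K(w-y)=K\bigl((x-y)-(x-w)\bigr),
\]
and the excluded ball $B_\eps(w)$ is mapped to $B_\eps(x-w)$, so the principal values coincide. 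This uses that $K$ depends only on the difference of its arguments; evenness of $K$ is also used, inside Lemma~\ref{lem:integration by parts for nonlocal operator}. The resulting integral is exactly $u\ast(\mathcal{L}_K\varphi)(x)$. It converges absolutely by Remark~\ref{rem:consequences of upper bounds}(iii) applied to the translate, or directly from \eqref{eq:bound on nonlocal operator II}.

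\textbf{Main obstacle.} The only delicate point is the uniform domination in the first equality, i.e.\ interchanging the principal-value limit with the $z$-integral. We handle it with the symmetrized second-order bound on the near region and the $L^1_s$ tail bound on the far region, both uniform for $x-z$ in a compact set.
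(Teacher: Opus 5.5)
Your proof is correct, and it departs from the paper's in two places.

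\textbf{First equality.} The paper does the same computation: expand $u\ast\varphi$, interchange the $z$- and $y$-integrals, translate $w=y-z$. However, it applies Fubini's Theorem to the untruncated integral. For the model kernel the integrand $|u(x-z)-u(y-z)|\,K(x-y)$ behaves like $|\nabla u(x-z)\cdot(y-x)|\,|x-y|^{-N-2s}$ near $y=x$. This is not integrable in $y$ when $s\ge 1/2$ and $\nabla u(x-z)\neq 0$. Your version actually justifies that interchange: Fubini for $\mathcal{L}_K^{\eps}$ at fixed $\eps$, then dominated convergence in $\eps$ via the bound \eqref{eq:step 5 in boundedness}, which is uniform for $x-z$ in a compact set.

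\textbf{Second equality.} The paper only says ``analogously''. That is, it proves $\mathcal{L}_K(u\ast\varphi)=u\ast(\mathcal{L}_K\varphi)$ by the same computation with the roles of $u$ and $\varphi$ exchanged. Done carefully, that needs a second domination argument: pair $u\in L^1_s(\RR^N)$ with bounds on $\mathcal{L}_K^{\eps}\varphi$ that are uniform in $\eps$ and decay like $|\xi|^{-N-2s}$, as in \eqref{eq:step 1 in decay}--\eqref{eq:step 2 in decay}. You instead prove $(\mathcal{L}_K u)\ast\varphi=u\ast(\mathcal{L}_K\varphi)$. You apply Lemma~\ref{lem:integration by parts for nonlocal operator} to the test function $\tau_x\varphi$ and use the invariance $\mathcal{L}_K(\tau_x\varphi)(w)=(\mathcal{L}_K\varphi)(x-w)$. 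So no new limit interchange is needed. The analytic work is delegated to the integration-by-parts lemma, at the cost of making this identity depend on it.

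\textbf{Two small precisions.} Neither affects the validity of your proof.
\begin{enumerate}[(a)]
\item Since $\tau_x\varphi$ involves a reflection, after $y'=x-y$ the kernel reads $K(y'-(x-w))$. It is evenness of $K$ that turns this into $K((x-w)-y')$. So evenness is used directly in the invariance step, not only inside Lemma~\ref{lem:integration by parts for nonlocal operator}.
\item The fixed-$\eps$ Fubini step rests on $K(h)\le\Lambda|h|^{-N-2s}$ for $|h|\ge\eps$, combined with the $L^1_s$ weight. The uniform bound $K\le\Lambda\eps^{-N-2s}$ alone would not give integrability of $u(y-z)$ in $y$.
\end{enumerate}
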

	\begin{proof}
		For $u$ and $\varphi$ as in the statement and $x\in\RR^N$, we have that
		\begin{equation*}
			\begin{split}
				\mathcal{L}_K\left(u\ast \varphi\right)(x)&= \int_{\RR^N}\left[u\ast\varphi(x)-u\ast\varphi(y)\right]\,K(x-y)\,dy\\
				&=\int_{\RR^N}\left[\int_{\RR^N}\left(u(x-z)-u(y-z)\right)\varphi(z)\,dz\right]\,K(x-y)dy\\
				&=\int_{\RR^N}\int_{\RR^N}\left(u(x-z)-u(y-z)\right)\varphi(z)\,K(x-y)\,dz\,dy\\
				&=\int_{\RR^N}\varphi(z)\int_{\RR^N}\left(u(x-z)-u(y-z)\right)\,K(x-y)\,dy\,dz\\
				&=\int_{\RR^N}\varphi(z)\int_{\RR^N}\left(u(x-z)-u(w)\right)\,K(x-w-z)\,dw\,dz\\
				&=\int_{\RR^N}\varphi(z)\mathcal{L}_K u(x-z)\,dz= \varphi\ast \left(\mathcal{L}_K u\right)(x),
			\end{split}
		\end{equation*}
		where, in the fourth line, we used Fubini's Theorem, and in the fifth line, we performed the change of variable $w=y-z$.
		Analogously, we find
		\begin{equation*}
		\mathcal{L}_K \left(u\ast \varphi\right)= u\ast\left(\mathcal{L}_K \varphi\right).\qedhere
		\end{equation*}
	\end{proof}

	\section{Proof of Proposition~\ref{prop: General Nonlocal Kato inequality}}\label{Section 3}
	This section aims to provide a proof of Proposition~\ref{prop: General Nonlocal Kato inequality}.
	But first, we show that the composition with $C^{2}_{\text{loc}}(\RR)$ convex Lipschitz functions preserves the space $L^{1}_{s}(\RR^N)$. We also prove a pointwise counterpart to~\eqref{eq:general Kato} for smooth functions.
	
	\begin{lemma}\label{lem:easy gen Kato} Let~$K:\RR^N \rightarrow \RR$ be a measurable, even function satisfying \eqref{eq:comparison of k with FL}, and let $\Phi\in C^{2}_{\text{loc}}(\RR)$ be a convex Lipschitz function. 
		Then, for any $u\in L^{1}_{s}(\RR^N)\cap C^{2}_{\text{loc}}(\RR^N)$, we have that $\Phi(u)\in L^{1}_{s}(\RR^N)\cap C^{2}_{\text{loc}}(\RR^N)$ and 
		\begin{equation}\label{eq:estimate on norm of convex comp}
			\|\Phi(u)\|_{L^{1}_{s}(\RR^N)}\leq \|\Phi'\|_{L^{\infty}(\RR)}\|u\|_{L^{1}_{s}(\RR)}+ C|\Phi(0)|,
		\end{equation}
		where $C$ is a positive constant depending only on $N$ and $s$.
		
		Furthermore, the following inequality holds: 
		\begin{equation}\label{eq:pointwise Kato}
			\mathcal{L}_K \Phi(u)(x)\leq \Phi'(u(x))\mathcal{L}_Ku(x),\,\,\text{ for all }\,\,x\in\RR^N.
		\end{equation}
	\end{lemma}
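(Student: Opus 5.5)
The lemma has three parts, and I would take them in order: regularity of $\Phi(u)$, the weighted norm bound, and the pointwise inequality. The main tool throughout is the Lipschitz bound $|\Phi(t)-\Phi(\tau)|\le \|\Phi'\|_{L^\infty(\mathbb{R})}|t-\tau|$ together with the convexity of $\Phi$.

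\textbf{Regularity.} Since $\Phi\in C^2_{\text{loc}}(\RR)$ and $u\in C^2_{\text{loc}}(\RR^N)$, the chain rule shows that $\Phi(u)\in C^2_{\text{loc}}(\RR^N)$. Explicitly,
$$D^2\Phi(u)=\Phi''(u)\,\nabla u\otimes\nabla u+\Phi'(u)\,D^2u,$$
which is locally bounded.

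\textbf{The norm bound.} The Lipschitz property gives $|\Phi(u(x))|\le \|\Phi'\|_{L^\infty(\RR)}|u(x)|+|\Phi(0)|$ for every $x$. Integrating this against the weight $(1+|x|^{N+2s})^{-1}$ yields \eqref{eq:estimate on norm of convex comp}, with
$$C=\int_{\RR^N}\frac{dx}{1+|x|^{N+2s}}<\infty,$$
which depends only on $N$ and $s$. In particular $\Phi(u)\in L^1_s(\RR^N)$, so by Lemma~\ref{lem:behaviour of the operator in D} the quantity $\mathcal{L}_K\Phi(u)(x)$ is well defined as a principal value at every point.

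\textbf{The pointwise inequality.} Convexity gives, for all $x,y$,
$$\Phi(u(y))\ge \Phi(u(x))+\Phi'(u(x))\bigl(u(y)-u(x)\bigr),$$
that is,
$$\Phi(u(x))-\Phi(u(y))\le \Phi'(u(x))\bigl(u(x)-u(y)\bigr).$$
Multiply by $K(x-y)\ge 0$ and integrate over $\RR^N\setminus B_\eps(x)$. Both truncated integrals are finite: the part away from $x$ is controlled by the $L^1_s$ bounds, and the annulus $B_1(x)\setminus B_\eps(x)$ is bounded because the functions are continuous and $K$ is integrable there. This gives, for each $\eps>0$,
$$\int_{\RR^N\setminus B_\eps(x)}\bigl(\Phi(u(x))-\Phi(u(y))\bigr)K(x-y)\,dy\le \Phi'(u(x))\int_{\RR^N\setminus B_\eps(x)}\bigl(u(x)-u(y)\bigr)K(x-y)\,dy.$$
Letting $\eps\to0^+$ yields \eqref{eq:pointwise Kato}.

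\textbf{Existence of the limits (the main technical step).} For a $C^2$ function $v$ in $L^1_s(\RR^N)$ I would split off the odd first-order term on the symmetric annulus $B_1(x)\setminus B_\eps(x)$: since $K$ is even,
$$\int_{B_1(x)\setminus B_\eps(x)}\nabla v(x)\cdot(y-x)\,K(x-y)\,dy=0.$$
The remaining integrand is $O(|x-y|^2)$ near $x$, so it is integrable against $K(x-y)\le \Lambda|x-y|^{-N-2s}$ and the limit as $\eps\to0^+$ exists. This is the same argument that underlies Lemma~\ref{lem:behaviour of the operator in D}, applied to $v=u$ and to $v=\Phi(u)$. Consequently both principal values exist, and the inequality between the truncated integrals is preserved in the limit.
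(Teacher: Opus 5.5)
Your proposal is correct and follows the paper's proof. You use the chain rule for regularity, the Lipschitz bound $|\Phi(t)|\le\|\Phi'\|_{L^{\infty}(\RR)}|t|+|\Phi(0)|$ integrated against the weight, and the tangent-line inequality $\Phi(u(x))-\Phi(u(y))\le\Phi'(u(x))(u(x)-u(y))$ multiplied by $K(x-y)$ and integrated. The only difference is that you integrate over $\RR^N\setminus B_{\eps}(x)$ and check that both principal values exist before letting $\eps\to0^+$, a step the paper leaves implicit.
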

	\begin{proof}
		Given $u\in L^{1}_{s}(\RR^N)\cap C^{2}_{\text{loc}}(\RR^N)$, we have that~$\Phi(u)\in C^{2}_{\text{loc}}(\RR^N)$ since $\Phi\in C^{2}_{\text{loc}}(\RR)$. 
		Furthermore, leveraging the assumption that $\Phi'\in L^{\infty}(\RR)$, we see that
		\begin{equation*}
			\int_{\RR^N}\frac{|\Phi(u(x))|}{1+|x|^{N+2s}}\,dx\leq \int_{\RR^N}\frac{|\Phi(u(x))-\Phi(0)| +|\Phi(0)|}{1+|x|^{N+2s}}\,dx\leq \int_{\RR^N}\frac{\|\Phi'\|_{L^{\infty}(\RR)}|u(x)|+|\Phi(0)|}{1+|x|^{N+2s}}\,dx,
		\end{equation*}
		from which \eqref{eq:estimate on norm of convex comp} follows by taking $C=\displaystyle\int_{\RR^N}\frac{1}{1+|x|^{N+2s}}\,dx<\infty$.
		
		Then, given that $\Phi$ is convex, we have, for any $t_1,t_2\in\RR$,
		\begin{equation*}
			\Phi(t_1) -\Phi(t_2)\leq \Phi'(t_1)(t_1-t_2).
		\end{equation*}
		Applying this inequality with $t_1=u(x)$ and $t_2 = u(y)$ we see that 
		\begin{equation*}
			\Phi(u(x)) -\Phi(u(y))\leq \Phi'(u(x))(u(x)-u(y)),
		\end{equation*}
		and therefore, 
		\begin{equation*}
			\begin{split}
				\mathcal{L}_K\Phi(u)(x)&=\int_{\RR^N}\left(\Phi(u(x))-\Phi(u(y))\right)K(x-y)\,dy\\
				&\leq \Phi'(u(x))\int_{\RR^N}\left(u(x)-u(y)\right)K(x-y)\,dy = \Phi'(u(x))\mathcal{L}_K u(x).\qedhere
			\end{split}
		\end{equation*}
	\end{proof}
	Now we are ready to prove the general nonlocal Kato-type inequality in Proposition~\ref{prop: General Nonlocal Kato inequality}.
	\begin{proof}[Proof of Proposition~\ref{prop: General Nonlocal Kato inequality}]
		Let~$u\in L^1_{s}(\RR^N)$ be such that $\mathcal{L}_K u = f \in L^1_{\text{loc}}(\RR^N)$, in the sense of distributions, i.e.,
		\begin{equation}\label{eq:distributional solution}
			\int_{\RR^N} f(x)\varphi(x)\,dx = \int_{\RR^N} u(x) \mathcal{L}_K \varphi(x)\,dx,\,\,\text{ for every }\,\,\varphi\in \mathcal{D}(\RR^N). 
		\end{equation}
		Let~$\eta\in \mathcal{D}(\RR^N)$ be such that 
		\begin{equation}
			0\leq \eta\leq 1,\,\, \int_{\RR^N}\eta(x)\,dx =1\,\,\text{and}\,\, \text{supp}(\varphi)\subset B_1(0),
		\end{equation}
		and, for all $m\in \NN$, define $\eta_{m}(x):=m^{N}\eta\left(mx\right)$. By the standard properties of convolutions with mollifiers and Lemma~\ref{lem:density in L1s} we have (up to a subsequence) that $u_m:=u\ast\eta_{m} \in C^{\infty}(\RR^N)\cap L^{1}_{s}(\RR^N)$ and $\|u_{m}-u\|_{L^{1}_s(\RR^N)}\to 0$ as $m\to \infty$.
		
		Now let $\Phi:\RR \rightarrow \RR$ be a Lipschitz, convex function. In light of Lemma~\ref{lem:approx of convex Lipschitz functions} (with $N=1$), we find a sequence $\Phi_{n}:\RR\rightarrow \RR$ ($n\in\mathbb{N}$) of smooth, convex functions, satisfying:
		\begin{enumerate}
			\item
			$
			\|\Phi_n'\|_{L^{\infty}(\RR)}\leq \|\Phi'\|_{L^{\infty}(\RR)}; 
			$
			\item 
			$
			\lim_{n\to \infty}\Phi_n'(t)\to \Phi'(t),\,\,\text{ for a.e. }\,\, t\in \RR;
			$
			\item 
			$
			\lim_{n\to \infty}\|\Phi_n-\Phi\|_{L^{\infty}(\RR)}= 0.
			$
		\end{enumerate}
		Given that $\|\Phi_{n}'\|_{L^{\infty}(\RR)}$ is uniformly bounded in $n$, we see that, for every $n\in\NN$ (and up to a subsequence in $m$), 
		$$\lim_{m\to \infty}\left\|\Phi_n(u_{m})-\Phi_n(u)\right\|_{L^{1}_{s}(\RR^N)}=0.
		$$
		For given $n\in\NN$, and $\varphi\in\mathcal{D}(\RR^N)$ we thus have:
		\begin{equation}\label{eq:gen Kato step 0}
			\begin{split}
				\int_{\RR^N} \Phi_{n}(u(x))\mathcal{L}_K \varphi(x)\,dx&=\lim_{m\to \infty}\int_{\RR^N}\Phi_{n}(u_{m}(x))\mathcal{L}_K \varphi(x)\,dx\\
				&=\lim_{m\to \infty}\int_{\RR^N}\mathcal{L}_K(\Phi_{n}(u_{m}))(x)\varphi(x)\,dx\\
				&\leq \lim_{m\to \infty}\int_{\RR^N}\Phi_{n}'(u_{m}(x))\mathcal{L}_K u_{m}(x)\varphi(x)\,dx\\
				&=\lim_{m\to \infty}\int_{\RR^N} u_{m}(x)\mathcal{L}_K\left(\Phi_{n}'(u_{m}) \varphi\right)(x)\,dx.
			\end{split}
		\end{equation}
		Here above, the first equality follows from \eqref{eq:explicit decay} (with $\varphi = v$ and $R=1$), the fact that $\Phi_n(u_m)\to \Phi_n(u)$, in $L^{1}_{s}(\RR^N)$, as $m\to \infty$, and the Dominated Convergence Theorem. 
		In the second and fourth lines, we employed Lemma~\ref{lem:integration by parts for nonlocal operator}. In the third line, we used \eqref{eq:pointwise Kato} in Lemma~\ref{lem:easy gen Kato}.\\
		Hence, denoting by $\rho_m$ the function given by $\rho_m(x)= m^{N}\eta\left(-m x\right)$, we follow up on \eqref{eq:gen Kato step 0} to obtain that
		\begin{equation}\label{eq:gen kato step 1}
			\begin{split}
				\int_{\RR^N} \Phi_{n}(u(x))\mathcal{L}_K \varphi(x)\,dx&\leq\lim_{m\to \infty}\int_{\RR^N} u\ast\eta_{m}(x)\mathcal{L}_K\left(\Phi_{n}'(u_{m}) \varphi\right)(x)\,dx\\
				&=\lim_{m\to \infty}\int_{\RR^N} u(z)\left(\rho_m\ast\mathcal{L}_K\left(\Phi_{n}'(u_{m})\varphi\right)\right)(z)\,dz\\
				&= \lim_{m\to \infty}\int_{\RR^N} u(z)\mathcal{L}_K\left(\rho_{m}\ast\left(\Phi_{n}'(u_{m})\varphi\right)\right)(z)\,dz\\
				&= \lim_{m\to \infty}\int_{\RR^N} f(z)\rho_{m}\ast\left(\Phi_{n}'(u_{m})\varphi\right)(z)\,dz\\
				&=\lim_{m\to \infty}\int_{\RR^N} \left(\eta_{m}\ast f\right)(x)\Phi_{n}'(u_{m}(x))\varphi(x)\,dx,
			\end{split}
		\end{equation}
		where, in the third line, we used Lemma~\ref{lem:convolution and the operator} and in the fourth line, we leveraged \eqref{eq:distributional solution}.\\
		Recalling that $\varphi\in\mathcal{D}(\RR^N)$, $\eta_{m}\ast f \to f$ in $L^{1}_{\text{loc}}(\RR^N)$ (as $m\to \infty$), $\Phi_{n}'\in L^{\infty}(\RR)$ and $\Phi_{n}'(u_m(z))\to \Phi_{n}'(u(z))$ (as $m\to \infty$) for a.e. $z\in\RR^N$, we obtain
		\begin{equation}\label{eq:gen kato step 2}
			\begin{split}
				\lim_{m\to \infty}\int_{\RR^N} \left(\eta_{m}\ast f\right)(x)\Phi_{n}'(u_{m}(x))\varphi(x)\,dx &= \lim_{m\to \infty}\int_{\RR^N}\left(\eta_{m}\ast f(x)-f(x)\right)\Phi_{n}'(u_{m}(x))\varphi(x)\,dx\\
				&+\lim_{m\to \infty}\int_{\RR^N}f(x)\left(\Phi_{n}'(u_{m}(x))-\Phi_{n}'(u(x))\right)\varphi(x)\,dx\\
				&+\int_{\RR^N}f(x)\Phi_{n}'(u(x))\varphi(x)\,dx\\
				&= \int_{\RR^N}f(x)\Phi_{n}'(u(x))\varphi(x)\,dx.
			\end{split}
		\end{equation}
		
		Hence, combining \eqref{eq:gen kato step 1} and \eqref{eq:gen kato step 2} we get, for any $n\in\NN$,
		\begin{equation*}
			\int_{\RR^N} \Phi_{n}(u(x))\mathcal{L}_K \varphi(x)\,dx\leq \int_{\RR^N}f(x)\Phi_{n}'(u(x))\varphi(x)\,dx.
		\end{equation*}
		
		Since $\Phi_{n}$ converges uniformly to $\Phi$, $\|\Phi_{n}'\|_{L^{\infty}(\RR)}$ is uniformly bounded and $\Phi_{n}'(u(x))\to \Phi'(u(x))$ (as $n\to \infty$) for a.e. $x\in \RR^N$, using the Dominated Convergence Theorem we conclude that 
		\begin{equation*}
			\begin{split}
				\int_{\RR^N}\Phi(u(x))\mathcal{L}_K \varphi(x)\,dx&=\lim_{n\to \infty}\int_{\RR^N}\Phi_n(u(x))\mathcal{L}_K \varphi(x)\,dx\leq \lim_{n\to \infty}\int_{\RR^N} \Phi_{n}'(u(x))f(x)\varphi(x)\,dx\\
				&= \int_{\RR^N}\Phi'(u(x))f(x)\varphi(x)\,dx.\qedhere
			\end{split}
		\end{equation*} 
	\end{proof}
	\section{Proof of Theorem~\ref{thm:boundedness of solutions}}\label{Section 4}
	This section is devoted to the proof of Theorem~\ref{thm:boundedness of solutions}.
	First, we present a result that extends~\cite[Lemma~$2.1$]{MR1659256} to solutions of \eqref{Main Problem}. Apart from a few technical changes, the proof follows the same line of reasoning as the proof of \cite[Lemma~$2.1$]{MR1659256}, but we include it in the Appendix~\ref{Appendix} for the readers' convenience. 
	
	\begin{lemma}\label{lem:distributional ineq}
		Let~$K:\RR^N\rightarrow \RR$ be a measurable, even function satisfying \eqref{eq:comparison of k with FL} and	let $u\in L^{1}_{s}(\RR^N;\RR^M)$ be a distributional solution of \eqref{Main Problem}. Then, there exists $\alpha>0$ such that, for any $i = 1,...,M$ and any nonnegative $\varphi\in \mathcal{D}(\RR^N)$, we have that
		\begin{equation}\label{eq:distributional ineq 2}
			\int_{\RR^N} \left(|u_i(x)|-\sqrt{k_n}\right)^+ \mathcal{L}_K\varphi(x)\,dx \leq-\alpha \int_{\RR^N}\left(|u_i|-\sqrt{k_n}\right)^+\varphi(x)\,dx.
		\end{equation}
	\end{lemma}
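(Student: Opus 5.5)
We will apply Proposition~\ref{prop: General Nonlocal Kato inequality} componentwise, with $\Phi(t)=(|t|-\sqrt{k_n})^+$. This $\Phi$ is convex and Lipschitz with constant $1$, and
\[
\Phi'(t)=\sign(t)\,\chi_{\{|t|>\sqrt{k_n}\}}
\]
(at the kinks we use the one-sided choice $0$, as in Proposition~\ref{prop:Nonlocal Kato inequality}, which the approximation argument via Lemma~\ref{lem:approx of convex Lipschitz functions} allows since the kinks form a null set of values and $\Phi'_n\to\Phi'$ a.e.). Fix $i\in\{1,\dots,M\}$. By \eqref{def:distributional solution}, $u_i\in L^1_s(\RR^N)$ solves
\[
\mathcal{L}_K u_i=f_i:=-u_iP_n'(|u|^2)\in L^1_{\text{loc}}(\RR^N).
\]
Local integrability of $f_i$ comes from $u\in L^{4n-1}_{\text{loc}}$, as noted in the Remark after Theorem~\ref{thm:boundedness of solutions}. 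Then \eqref{eq:general Kato} gives, for nonnegative $\varphi\in\mathcal{D}(\RR^N)$,
\[
\int_{\RR^N}(|u_i|-\sqrt{k_n})^+\mathcal{L}_K\varphi\,dx\le -\int_{\{|u_i|>\sqrt{k_n}\}}|u_i|\,P_n'(|u|^2)\,\varphi\,dx,
\]
because $\sign(u_i)\,u_i=|u_i|$.

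The remaining step is pointwise: we need $|u_i|P_n'(|u|^2)\ge\alpha(|u_i|-\sqrt{k_n})^+$ on the set $\{|u_i|>\sqrt{k_n}\}$. On that set $|u|^2\ge u_i^2>k_n$. Compute
\[
P_n'(t)=\prod_{j}(t-k_j)\sum_{l}\prod_{j\neq l}(t-k_j).
\]
For $t>k_n$ every factor is positive, so $P_n'(t)>0$. Moreover both
\[
\prod_j(t-k_j)\ge t-k_n \quad\text{and}\quad \sum_l\prod_{j\ne l}(t-k_j)\ge \prod_{j<n}(k_n-k_j)=:c_0>0
\]
hold, and both are nondecreasing in $t$ for $t\ge k_n$. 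Hence $P_n'(t)\ge c_0(t-k_n)$ for $t\ge k_n$. Using $t=|u|^2\ge u_i^2$ and writing $a=|u_i|$, we get
\[
|u_i|\,P_n'(|u|^2)\ge c_0\,a\,(a^2-k_n)=c_0\,a(a+\sqrt{k_n})(a-\sqrt{k_n})\ge 2c_0k_n(a-\sqrt{k_n}),
\]
since $a>\sqrt{k_n}$. So $\alpha=2c_0k_n$ works; when $n=1$, $c_0=1$ by the empty-product convention. Since $\varphi\ge0$, this yields \eqref{eq:distributional ineq 2}.

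The main obstacle is justifying the monotonicity used in passing from $|u|^2$ to $u_i^2$, which is where the system structure could cause trouble. It is handled by working only for $t\ge k_n$, where each factor $t-k_j$ is positive and increasing. A minor additional point is the validity of Proposition~\ref{prop: General Nonlocal Kato inequality} for a merely Lipschitz $\Phi$ with a specified choice of $\Phi'$ at the kinks; the only requirement is that the right-hand side vanishes where $|u_i|=\sqrt{k_n}$. This is consistent with the proposition as stated, since $\Phi'$ exists except at three points and the level sets can be handled by noting that $f_i$ is evaluated only on $\{|u_i|>\sqrt{k_n}\}$ after adopting the convention $\Phi'=0$ there.
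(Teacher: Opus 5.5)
Your overall strategy works, but one intermediate inequality is false, and it makes your explicit constant wrong. You assert $\prod_j(t-k_j)\ge t-k_n$ for $t\ge k_n$. In fact $\prod_j(t-k_j)=(t-k_n)\prod_{j<n}(t-k_j)$, and the second factor is only bounded below by $c_0=\prod_{j<n}(k_n-k_j)$, which may be less than $1$. For instance, with $n=2$, $k_1=\tfrac12$, $k_2=1$ you have $c_0=\tfrac12$ and $P_2'(t)=(t-\tfrac12)(t-1)(2t-\tfrac32)$, so $P_2'(t)/(t-1)\to\tfrac14<c_0$ as $t\downarrow 1$. Thus $P_n'(t)\ge c_0(t-k_n)$ fails near $k_n$. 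With $M=1$, your pointwise bound $|u_i|P_n'(|u|^2)\ge 2c_0k_n(|u_i|-\sqrt{k_n})$ then fails for $|u_i|$ slightly above $1$. The repair is immediate. Combine $\prod_j(t-k_j)\ge c_0(t-k_n)$ with your correct bound $\sum_l\prod_{j\ne l}(t-k_j)\ge c_0$ to get $P_n'(t)\ge c_0^2(t-k_n)$ for $t\ge k_n$, so $\alpha=2c_0^2k_n$ works. This is exactly the paper's constant $S(\sqrt{k_n})$, since its $Q_n(t)=\sum_l\prod_{j\ne l}(t-k_j)$ satisfies $Q_n(k_n)=c_0$.

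With this correction your proof is valid and somewhat more economical than the paper's. The paper applies the $\sign^+$ version of Kato's inequality to $u_i-\sqrt{k_n}$. This first requires checking, via the integration by parts lemma, that $\mathcal{L}_K$ annihilates constants, so that $\mathcal{L}_K(u_i-\sqrt{k_n})=\mathcal{L}_Ku_i$. The paper then repeats the argument for the solution $-u$ and adds the two inequalities, using $(|u_i|-\sqrt{k_n})^+=(u_i-\sqrt{k_n})^++(-u_i-\sqrt{k_n})^+$. Your single application of Proposition~\ref{prop: General Nonlocal Kato inequality} with the convex, $1$-Lipschitz $\Phi(t)=(|t|-\sqrt{k_n})^+$ treats both signs at once and needs neither step. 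The lower bound on $P_n'$ is essentially the same computation in both proofs.

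Your justification at the kinks, however, is not right as phrased. $\Phi$ has only two kinks, $\pm\sqrt{k_n}$; it vanishes identically near $0$. Their forming a null set of values does not give $\Phi_n'(u_i)\to\Phi'(u_i)$ a.e. in $x$, because $u_i$ may equal $\pm\sqrt{k_n}$ on a set of positive measure. What is true is that $\Phi_n'(\pm\sqrt{k_n})\to\pm c_\pm$ for some $c_\pm\in[0,1]$. So the extra contribution on $\{|u_i|=\sqrt{k_n}\}$ is $-c_\pm|u_i|P_n'(|u|^2)\varphi\le 0$, since $|u|^2\ge k_n$ there, and it can simply be dropped. The paper's argument faces the same issue for $\sign^+$ at $0$ without comment, and the same sign observation settles it.
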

	\begin{proof}See Appendix \ref{Appendix}.
	\end{proof}
	
	Now we are ready to prove Theorem~\ref{thm:boundedness of solutions}. Our proof can now follow the same outline as the proof of \cite[Proposition $2.2$]{MR1659256}, by relying now on the auxiliary results that we have developed in the previous analysis.
	
	\begin{proof}[Proof of Theorem~\ref{thm:boundedness of solutions}]
		Given $i\in \{1,...,M\}$, we set $\phi_{i,n}:=\left(|u_i|-\sqrt{k_n}\right)^+$. 
		
		First, we prove that if either \eqref{assumption 1 on sol} or \eqref{assumption 2 on sol} hold, then there exist a point $x_0\in \RR^N$, positive constants $C$, $R_0$ and $\lambda<N+2s$ (all independent of~$R$) such that, for all $R>R_0$:
		\begin{equation}\label{eq:step 1 in bounded sol}
			\int_{B_{R}(x_0)}\phi_{i,n}(x)\,dx\leq CR^{\lambda}.
		\end{equation}
		
		If \eqref{assumption 2 on sol} holds, then \eqref{eq:step 1 in bounded sol} is immediate. 
		
		If, on the other hand, \eqref{assumption 1 on sol} holds, then we consider $\xi \in \mathcal{D}(\RR^N)$ such that 
		$$
		\text{supp}(\xi)\subset B_{2}(0), \,\,0\leq \xi\leq 1\,\,\text{and}\,\,\xi_{\vert_{B_{1}(0)}} \equiv 1,
		$$	
		and we define $\xi_R(x):= \xi\left(R^{-1}(x-x_0)\right)$.\\
		Then, using Lemma~\ref{lem:distributional ineq} and \eqref{eq:partial integration by parts} we see that
		\begin{equation}\label{eq:step 1.1 in bounded sol}
			\begin{split}&\!\!\!\!\!\!
				\int_{B_{R}(x_0)}\phi_{i,n}(x)\,dx\\&\leq \int_{\RR^N}\phi_{i,n}(x)\xi_R(x)\,dx\\&\leq -\alpha^{-1}\int_{\RR^N}\phi_{i,n}(x)\mathcal{L}_K \xi_R(x)\,dx\\
				&=-\frac{1}{2\alpha}\lim_{\eps\to 0^+}\int_{\RR^N}\int_{\RR^N\setminus B_{\eps}(x)}\left(\phi_{i,n}(x)-\phi_{i,n}(y)\right)\left(\xi_R(x)-\xi_R(y)\right)\,K(x-y)\,dy\,dx\\
				&\leq \frac{1}{2\alpha}\lim_{\eps\to 0^+}\int_{\RR^N}\int_{\RR^N\setminus B_{\eps}(x)}\left|\phi_{i,n}(x)-\phi_{i,n}(y)\right|\left|\xi_R(x)-\xi_R(y)\right|\,K(x-y)\,dy\,dx\\
				&\leq \frac{1}{2\alpha}\int_{\RR^N}\int_{\RR^N}\left|\phi_{i,n}(x)-\phi_{i,n}(y)\right|\left|\xi_R(x)-\xi_R(y)\right|\,K(x-y)\,dy\,dx.
			\end{split}
		\end{equation}
		Noting that, for $(x,y)\in (\RR^N\times \RR^N)\setminus T_{B_{2R}(x_0)}$ (where $T_{B_{2R}(x_0)}$ is as defined in the statement of Theorem~\ref{thm:boundedness of solutions}), it holds that~$\xi_{R}(x)-\xi_{R}(y)=0$, we use Hölder's inequality, \eqref{assumption 1 on sol} with~$R$ replaced by~$2R$ and \eqref{eq:energy estimates for smooth compact supp functions} to obtain that
		\begin{equation}
			\begin{split}
				&\int_{B_{R}(x_0)}\phi_{i,n}(x)\,dx\leq\frac{1}{2\alpha}\iint_{T_{B_{2R}(x_0)}}\left|\phi_{i,n}(x)-\phi_{i,n}(y)\right|\left|\xi_R(x)-\xi_R(y)\right|\,K(x-y)\,dy\,dx\\
				&\leq \frac{1}{2\alpha}\left(\iint_{T_{B_{2R}(x_0)}}\left|\phi_{i,n}(x)-\phi_{i,n}(y)\right|^2\,K(x-y)\,dy\,dx\right)^{1/2}\\
				&\times\left(\iint_{T_{B_{2R}(x_0)}}\left|\xi_R(x)-\xi_R(y)\right|^{2}\,K(x-y)\,dy\,dx\right)^{1/2}\\
				&\leq \frac{\left(2C C_{2}\right)^{1/2}}{2\alpha}\|\xi\|_{L^{1}_{s}(\RR^N)}R^{\frac{h+N-2s}{2}},
			\end{split}
		\end{equation}
		where $C_{2}$ is the constant that appears in \eqref{eq:energy estimates for smooth compact supp functions} and $C$ is the constant that appears in \eqref{assumption 1 on sol}.
		Taking into account that (in \eqref{assumption 1 on sol}) $h<N+6s$, the claim in~\eqref{eq:step 1 in bounded sol} follows with $\lambda = \frac{h+N-2s}{2}$.
		
		Now we use mathematical induction to prove that, for each nonnegative integer $m$, there exists a positive constant $A_m$ such that, for all $R>\max\{R_0, |x_0|\}$,
		\begin{equation}\label{eq:step 2 in bounded sol}
			\int_{B_{R}(x_0)}\phi_{i,n}(x)\,dx \leq A_m R^{\lambda-2sm}.
		\end{equation}
		Letting $\xi_R$ be as before and leveraging Lemma~\ref{lem:distributional ineq} and \eqref{eq:explicit decay} we obtain 
		\begin{equation}\label{eq:step 2.1 in bounded sol}
			\begin{split}
				&\int_{B_{R}(x_0)}\phi_{i,n}(x)\,dx\leq \int_{\RR^N} \phi_{i,n}\xi_{R}(x)\,dx \leq -\alpha^{-1}\int_{\RR^N} \phi_{i,n}(x) \mathcal{L}_K\xi_R(x)\,dx\\
				&\leq \alpha^{-1}R^{-2s}C_{2}\int_{\RR^N}\frac{\phi_{i,n}(x)}{1+\left|\frac{x}{R}\right|^{N+2s}}\,dx\\
				&\leq \alpha^{-1}C_{2}R^{-2s}\left(\int_{B_{R}(x_0)}\phi_{i,n}(x)\,dx+\sum_{k=1}^{\infty}\int_{B_{2^{k}R}(x_0)\setminus B_{2^{k-1}R}(x_0)}\frac{\phi_{i,n}(x)}{1+\left|\frac{x}{R}\right|^{N+2s}}\,dx\right),
			\end{split}
		\end{equation}
		where $C_{2}$ is the constant that appears in \eqref{eq:explicit decay}.
		
		Since $R>|x_0|$, simple geometric considerations imply, for any $k>1$: 
		\begin{equation*}
			\max_{x\in B_{2^{k}R}(x_0)\setminus B_{2^{k-1}R}(x_0)}\frac{1}{1+\left|\frac{x}{R}\right|^{N+2s}} = \frac{1}{1+\left(2^{k-1}-\frac{|x_0|}{R}\right)^{N+2s}}\leq C_{3} 2^{-(k-1)(N+2s)},
		\end{equation*} 
		for some positive constant $C_{3}$ depending only on $N$ and $s$.
		
		This, in combination with \eqref{eq:step 2.1 in bounded sol} and the induction hypothesis, leads to 
		\begin{equation}\label{eq:step 2.2 in bounded sol}
			\begin{split}
				&\int_{B_{R}(x_0)}\phi_{i,n}(x)\,dx\\
				&\leq \alpha^{-1}C_{2}R^{-2s}\left(A_{m-1}R^{\lambda -2s(m-1)}+\sum_{k=1}^{\infty}\int_{B_{2^{k}R}(x_0)\setminus B_{2^{k-1}R}(x_0)}2^{-(k-1)(N+2s)}\phi_{i,n}(x)\,dx\right)\\
				&\leq \alpha^{-1}C_{2}C_{3} R^{-2s}A_{m-1}\left(R^{\lambda-2s(m-1)}+\sum_{k=1}^{\infty}2^{-(k-1)(N+2s)}\left(2^{k}R\right)^{\lambda-2s(m-1)}\right)\\
				&\leq \alpha^{-1}C_{2}C_{3}A_{m-1} R^{\lambda-2sm}\left(1+\sum_{k=1}^{\infty} 2^{-(k-1)(N+2s)+k(\lambda-2s(m-1))}\right)
			\end{split}
		\end{equation}
		
		Since, $m\geq 1$ and $\lambda<N+2s$, by taking 
		$$
		A_{m}= \alpha^{-1}2^{N+2s}C_{2}C_{3}A_{m-1} \left(1+\sum_{k=1}^{\infty} 2^{-(k-1)(N+2s)+k(\lambda-2s(m-1))}\right),
		$$
		we obtain \eqref{eq:step 2 in bounded sol} with $m$.
		
		In particular, taking $m>\frac{\lambda}{2s}$ and taking the limit $R\to \infty$ we conclude that $\phi_{i,n}\equiv 0$, i.e., $|u_i|\leq \sqrt{k_n}$, from which \eqref{eq:bound on solution} follows immediately.
	\end{proof}
	\begin{remark}\label{rem:necessity for aditional assumptions}
		{\rm In this remark, we show that \eqref{eq:integrability assumption on sol} is not enough to derive the conclusion of Theorem~\ref{thm:boundedness of solutions} with the arguments we employed. 
	 	  Note that for any $x_0\in \RR^N$ and $R>|x_0|$, assumption \eqref{eq:integrability assumption on sol} yields that 
			\begin{equation}\label{eq:not an alternative}
				\begin{split}
					&\int_{B_{R}(x_0)} \phi_{i,n}(x)\,dx\\
					&\leq \int_{B_{R+|x_0|}(0)}\phi_{i,n}(x)\,dx\leq 2(R+|x_0|)^{N+2s}\int_{B_{R}(0)}\frac{\phi_{i,n}(x)}{1+|x|^{N+2s}}\,dx\\
					&\leq  2^{N+1+2s}R^{N+2s}\int_{\RR^N}\frac{\phi_{i,n}(x)}{1+|x|^{N+2s}}\,dx \leq 2^{N+1+2s}R^{N+2s}\int_{\RR^N}\frac{|u(x)|+\sqrt{k_n}}{1+|x|^{N+2s}}\,dx\\
					&\leq 2^{N+1+2s}\left[\|u\|_{L^{1}_{s}(\RR^N;\RR^M)}+C_{N,s,k_n}\right]R^{N+2s},
				\end{split}
			\end{equation}
			where $C_{N,s,k_n}$ is given by 
			$$
			C_{N,s,k_n} = \int_{\RR^N}\frac{\sqrt{k_n}}{1+|x|^{N+2s}}\,dx.
			$$
			If one attempts to execute the induction argument in the proof of Theorem~\ref{thm:boundedness of solutions} with \eqref{eq:not an alternative} instead of \eqref{eq:step 1 in bounded sol}, the argument breaks down. Indeed, repeating \eqref{eq:step 2.1 in bounded sol} and \eqref{eq:step 2.2 in bounded sol},  with $m=1$ and $\lambda = N+2s$ we obtain
			\begin{equation*}
				\begin{split}
					\int_{B_{R}(0)} \phi_{i,n}(x)\,dx&\leq C R^{N}\left(1+\sum_{k=1}^{\infty} 2^{-(k-1)(N+2s)+(N+2s)k}\right)\\
					&\leq C R^{N}\left(1+\sum_{k=1}^{\infty} 2^{N+2s}\right) =\infty,
				\end{split}
			\end{equation*}
			where, in the first inequality $C$ is a constant depending only on $N$, $s$, $\|u\|_{L^{1}_{s}(\RR^N;\RR^M)}$, $k_n$ and $\alpha$. This last estimate breaks the induction argument.
		}
	\end{remark}

 \section{Proof of Corollary \ref{cor:fractional DeGiorgi in 2D}}\label{sec:proof of De Giorgi}
 
 \begin{proof}
 	 By Theorem~\ref{thm:boundedness of solutions} we have that $u\in L^{\infty}(\RR^2)$ with 
		$$
		\left\|u\right\|_{L^{\infty}(\RR^2)}\leq \sqrt{k_n}.
		$$ 
		Therefore, we have by a bootstrap application of fractional elliptic regularity theory (see e.g. \cite{MR4769823}) that $u$ has bounded derivative and therefore 
		 \begin{equation*}
		 	\int_{\RR^2}\frac{\left|\partial_{x_2}u(x-ty)\right|}{1+|y|^{2+2s}}\,dy<\infty,\quad \text{for all}\,\, x\in\RR^2 \,\,\text{and}\,\, t>0.
		 \end{equation*}
		
		In particular, $u$ satisfies the assumptions of \cite[Theorem $1.2$]{MR4968697}. This implies the desired result. 
 \end{proof}
	\appendix
	\counterwithin*{equation}{section}
	\renewcommand\theequation{\thesection.\arabic{equation}}
	\section{Auxiliary and technical results and proof of some simple lemmata}\label{Appendix}
	
	The first result we present refers to the approximation of functions in $L^{1}_{s}(\RR^N)$ by means of convolution with mollifiers, for a proof, see e.g.,\cite[Proposition A.$1$]{MR4942118}. 
	\begin{lemma}\label{lem:density in L1s}
		Let~$u\in L^{1}_{s}(\RR^N)$ and let $\eta\in\mathcal{D}(\RR^N)$ be such that 
		\begin{equation}
			0\leq \eta\leq 1,\,\, \int_{\RR^N}\eta(x)\,dx =1\,\,\text{and}\,\, \text{supp}(\varphi)\subset B_1(0),
		\end{equation}
		and, for $n\in \NN$, define $\eta_{n}(x):=n^{N}\eta\left(nx\right)$.\\
		Then, for every $n\in \NN$ we have $u\ast \eta_{n}\in C^{\infty}(\RR^N)\cap L^{1}_{s}(\RR^N)$ and, up to a subsequence, 
		\begin{equation*}
			\|u\ast\eta_{n}-u\|_{L^{1}_{s}(\RR^N)}\to 0,\,\,\text{ as }\,\, n\to\infty.
		\end{equation*}
	\end{lemma}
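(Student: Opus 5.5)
The plan is to use the standard Friedrichs mollifier argument, with the weight $w(x):=(1+|x|^{N+2s})^{-1}$ controlled under small translations. Smoothness of $u\ast\eta_n$ is immediate: $u\in L^1_{\text{loc}}(\RR^N)$ and $\eta_n\in\mathcal{D}(\RR^N)$, so derivatives fall on $\eta_n$. For membership in $L^1_s(\RR^N)$ and for convergence, the key pointwise fact is
\begin{equation*}
w(x)\leq C_{N,s}\, w(x-z)\quad\text{for all } x\in\RR^N,\ |z|\leq 1 .
\end{equation*}
This holds because $1+|x-z|^{N+2s}\leq 1+(|x|+1)^{N+2s}\leq C(1+|x|^{N+2s})$. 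Then, by Tonelli,
\begin{equation*}
\int_{\RR^N}|u\ast\eta_n|\,w\,dx\leq\int_{\RR^N}\eta_n(z)\int_{\RR^N}|u(x-z)|\,w(x)\,dx\,dz\leq C_{N,s}\|u\|_{L^1_s(\RR^N)},
\end{equation*}
since $\operatorname{supp}\eta_n\subset B_{1/n}\subset B_1$. So the convolution is a bounded operator on $L^1_s$, uniformly in $n$.

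For convergence I will use a density / three-epsilon argument. Given $\delta>0$, I pick $g\in C_c(\RR^N)$ with $\|u-g\|_{L^1_s}<\delta$. Such a $g$ exists because $L^1_s$ is $L^1(w\,dx)$, and continuous compactly supported functions are dense in $L^1$ of a Radon measure. I then split
\begin{equation*}
\|u\ast\eta_n-u\|_{L^1_s}\leq\|(u-g)\ast\eta_n\|_{L^1_s}+\|g\ast\eta_n-g\|_{L^1_s}+\|g-u\|_{L^1_s}.
\end{equation*}
The first term is at most $C_{N,s}\delta$ by the uniform bound above, and the third is at most $\delta$. For the middle term, $g\ast\eta_n\to g$ uniformly by uniform continuity of $g$, and all supports lie in a fixed compact set $\operatorname{supp}g+B_1$. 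Since $w\le1$, the middle term tends to zero. Hence $\limsup_n\|u\ast\eta_n-u\|_{L^1_s}\leq (C_{N,s}+1)\delta$, and letting $\delta\to0$ gives convergence of the full sequence, so in particular along a subsequence, as stated. The subsequence is also what is used in the paper to get a.e. convergence.

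The only step needing care is the translation comparability of the weight, which is what makes the convolution uniformly bounded on $L^1_s$. It works because the weight is polynomial and the mollifier supports are uniformly bounded. The rest is routine. (The hypothesis ``$\operatorname{supp}(\varphi)\subset B_1(0)$'' in the statement should be read as $\operatorname{supp}(\eta)\subset B_1(0)$.)
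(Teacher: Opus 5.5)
The paper gives no argument of its own for this lemma; it only refers to Proposition A.1 of an earlier work, so there is no in-paper route to compare yours against. Your proof is correct and self-contained. The translation estimate $w(x)\le C_{N,s}\,w(x-z)$ for $|z|\le 1$ holds, for instance, with $C_{N,s}=1+2^{N+2s-1}$. Combined with Tonelli and $\eta_n\ge 0$, it gives the $n$-uniform bound $\|u\ast\eta_n\|_{L^1_s(\RR^N)}\le C_{N,s}\|u\|_{L^1_s(\RR^N)}$. Density of $C_c(\RR^N)$ in $L^1(w\,dx)$ then closes the three-term argument, together with uniform convergence $g\ast\eta_n\to g$ on the fixed compact set $\operatorname{supp}g+B_1$. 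For that density step, note that $w\,dx$ is even a finite measure, and $L^1_s(\RR^N)=L^1(w\,dx)$ because $w$ is bounded below on compact sets.

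Compared with the bare citation, your version is explicit and shows that the whole sequence converges, so the qualifier ``up to a subsequence'' in the statement is unnecessary. A subsequence matters only for the pointwise a.e. convergence $u_m\to u$ used later in the proof of the general Kato inequality. Even that holds for the full sequence, at every Lebesgue point of $u$, since $\eta$ is bounded with support in $B_1(0)$. Your reading of $\operatorname{supp}(\varphi)$ as $\operatorname{supp}(\eta)$ is the intended one.
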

	
	The second result we present states that we can approximate, in a suitable sense, a convex Lipschitz function by a sequence of smooth, convex functions with uniformly bounded derivatives.
	\begin{lemma}\label{lem:approx of convex Lipschitz functions}
		Let~$F:\RR^N\rightarrow \RR$ be a convex Lipschitz function.
		Let~$\eta\in \mathcal{D}(\RR^N)$ be a nonnegative function satisfying $\int_{\RR^N} \eta(x)\,dx = 1$.
		For $n\in\NN $, we define the functions $\eta_{n}(x):=n^{N}\eta(nx)$.
		Then, $F_n := F\ast\eta_{n}$ is a smooth, convex function satisfying
	 \begin{equation}\label{eq:property 1 of approx convex}
				\|\nabla F_n\|_{L^{\infty}(\RR^N)}\leq \|\nabla F\|_{L^{\infty}(\RR^N)}.
			\end{equation}
			Also, up to a subsequence, 
			\begin{equation}\label{eq:property 2 of approx convex}
				\lim_{n\to \infty }\nabla F_{n}(x)= \nabla F(x),\,\,\text{ for a.e. }\,\,x\in\RR^N.
			\end{equation}
			Moreover
			\begin{equation}\label{eq:property 3 of approx convex}
				\lim_{n\to\infty}\|F_{n}-F\|_{L^{\infty}(\RR^N)}\to 0.
			\end{equation}
	\end{lemma}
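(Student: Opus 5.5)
We prove the three claims of Lemma~\ref{lem:approx of convex Lipschitz functions} in order, using only elementary properties of convolution with the nonnegative, unit-mass kernel $\eta_n$.

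\emph{Smoothness and convexity.} Since $F$ is Lipschitz, it is continuous and grows at most linearly. Because $\eta_n\in\mathcal{D}(\RR^N)$, the convolution $F_n=F\ast\eta_n$ is well defined and $C^\infty$, with derivatives falling on $\eta_n$. For convexity, write $F_n(x)=\int F(x-z)\eta_n(z)\,dz$. For $\theta\in[0,1]$,
\[
F(\theta x_1+(1-\theta)x_2-z)\le \theta F(x_1-z)+(1-\theta)F(x_2-z).
\]
Integrating this against the nonnegative weight $\eta_n(z)$ gives convexity of $F_n$.

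\emph{Gradient bound.} By Rademacher's theorem, $F$ is differentiable a.e. with $\|\nabla F\|_{L^\infty}\le \mathrm{Lip}(F)$. Lipschitz functions are $W^{1,\infty}_{\text{loc}}$, and their distributional gradient is the a.e. gradient. Hence $\nabla F_n=(\nabla F)\ast\eta_n$, and $|\nabla F_n(x)|\le \|\nabla F\|_{L^\infty}\int\eta_n=\|\nabla F\|_{L^\infty}$, which is \eqref{eq:property 1 of approx convex}.

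\emph{A.e. convergence of gradients.} Since $\nabla F\in L^\infty\subset L^1_{\text{loc}}$, standard mollifier theory gives $(\nabla F)\ast\eta_n\to\nabla F$ in $L^1_{\text{loc}}$. A diagonal argument over balls $B_k$ then extracts a subsequence converging a.e., giving \eqref{eq:property 2 of approx convex}. (If $\eta$ is radially bounded, the Lebesgue differentiation theorem even gives convergence at every Lebesgue point without passing to a subsequence. The subsequence version suffices and matches the statement.)

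\emph{Uniform convergence.} Using $\int\eta_n=1$,
\[
|F_n(x)-F(x)|\le\int|F(x-z)-F(x)|\eta_n(z)\,dz\le \mathrm{Lip}(F)\int|z|\eta_n(z)\,dz=\frac{\mathrm{Lip}(F)}{n}\int|w|\eta(w)\,dw.
\]
The right-hand side is independent of $x$ and tends to $0$, proving \eqref{eq:property 3 of approx convex}. This uses only that $\eta$ has compact support, so its first moment is finite.

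The only step requiring care is identifying $\nabla F_n$ with $(\nabla F)\ast\eta_n$, that is, that the a.e. gradient of a Lipschitz function is its weak gradient. This is standard, via difference quotients and dominated convergence, and I would cite it rather than reprove it. Everything else is routine.
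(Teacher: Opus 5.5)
Your proof is correct and follows essentially the same route as the paper. Both arguments get convexity by integrating the convexity inequality against $\eta_n\ge 0$, the gradient bound from $\nabla F_n=\eta_n\ast\nabla F$, a.e. convergence of the gradients from $L^1_{\text{loc}}$ convergence of mollifications along a subsequence, and uniform convergence from the first-moment bound $|F_n(x)-F(x)|\le \frac{1}{n}\,\mathrm{Lip}(F)\int|z|\eta(z)\,dz$. You are slightly more explicit than the paper, since you spell out the diagonal extraction over balls and the identification of the a.e. gradient of a Lipschitz function with its weak gradient.
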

	\begin{proof}
		The smoothness of $F_n$ is a direct consequence of the smoothness of $\eta$ and the properties of the convolution of two functions. The convexity of $F_n$ follows from the convexity of $F$ and the nonnegativity of $\eta$. Indeed, for any $x,,z\in\RR^N$ and any $t\in[0,1]$,
		\begin{equation*}
			\begin{split}
				F_n(tx+(1-t)z)&=\int_{\RR^N}\eta_{n}(y)F(tx+(1-t)z-y)\,dy\\
				&\leq  \int_{\RR^N}\eta_{n}(y)\left(tF(x-y)+(1-t)F(z-y)\right)\,dy\\
				&=t F_{n}(x) +(1-t)F_{n}(z).
			\end{split}
		\end{equation*}
		
		The standard properties of the convolution between two functions lead to~\eqref{eq:property 1 of approx convex}, since, for a.e. $x\in\RR^N$,
		\begin{equation*}
			\left|\nabla F_{n}(x)\right|=\left|\eta_{n}\ast \nabla F(x)\right|\leq \int_{\RR^N}\eta_{n}(y)\left|\nabla F(x-y)\right|\,dy\leq \|\nabla F\|_{L^{\infty}(\RR^N)}.
		\end{equation*}
		Since $L^{\infty}(\RR^N)\subset L^{1}_{\text{loc}}(\RR^N)$, the claim \eqref{eq:property 3 of approx convex} follows immediately from the well-known fact that $\nabla F_n = \nabla F \ast \eta_{n} \rightarrow \nabla F$ (as $n\to \infty$) in $L^{1}_{\text{loc}}(\RR^N)$. 
		
		Using the fact that $\eta$ is nonnegative and satisfies $\int_{\RR^N}\eta(y) \,dy=1$, we see that
		\begin{equation*}
			\begin{split}
				\left|F\ast \eta_{n}(x)-F(x)\right|&= \left|\int_{\RR^N}\eta_{n}(y)\left[F(x-y)-F(x)\right]\,dy\right|\leq \|\nabla F\|_{L^{\infty}(\RR^N)}\int_{\RR^N}\eta_{n}(y)|y|\,dy\\
				&=\frac{1}{n}\|\nabla F\|_{L^{\infty}(\RR^N)}\int_{\RR^N}\eta(z)|z|\,dz.
			\end{split}
		\end{equation*}
		Given that $\eta\in\mathcal{D}(\RR^N)$, then $C_{4} = \int_{\RR^N}\eta(z)|z|\,dz <\infty$, and thus the arbitrariness of $x\in \RR^N$ implies \eqref{eq:property 2 of approx convex}, since
		\begin{equation*}
			\lim_{n\to \infty}\left\|F\ast \eta_{n}-F\right\|_{L^{\infty}(\RR^N)} \leq \lim_{n\to \infty} C_{4}\|\nabla F\|_{L^{\infty}(\RR^N)}\frac{1}{n} =0.\qedhere
		\end{equation*} 
	\end{proof}
	
	\begin{proof}[Proof of Lemma~\ref{lem:behaviour of the operator in D}]
		Let~$\varphi\in L^{1}_{s}(\RR^N)\cap C^{2}_{\text{loc}}(\RR^N)$. For any $x\in \RR^N$ and any $r>\eps>0$ we have that
		\begin{equation}\label{eq:step 1 in boundedness}
			\begin{split}
				\int_{B_{r}(x)\setminus B_{\eps}(x)}(\varphi(x)-\varphi(y))\,K(x-y)\,dy&= \int_{B_{r}(0)\setminus B_{\eps}(0)}(\varphi(x)-\varphi(x+z)))\,K(z)\,dz\\
				&= \frac{1}{2}\int_{B_{r}(0)\setminus B_{\eps}(0)}(\varphi(x)-\varphi(x+z))\,K(z)\,dz\\
				&+\frac{1}{2}\int_{B_{r}(0)\setminus B_{\eps}(0)}(\varphi(x)-\varphi(x-z))\,K(-z)\,dz\\
				&=\frac{1}{2}\int_{B_{r}(0)\setminus B_{\eps}(0)}\left(2\varphi(x)-\varphi(x+z)-\varphi(x-z)\right)\,K(z)\,dz.
			\end{split}
		\end{equation}
		Using the Fundamental Theorem of Calculus, we see that
		\begin{equation*}
			\begin{split}
				|2\varphi(x)-\varphi(x+z)-\varphi(x-z)|&= \left|\int_{0}^{1}\nabla\varphi(x+tz)\cdot z-\nabla\varphi(x-tz)\cdot z\,dt\right|\\
				&\leq \int_{0}^{1}2\|D^2 \varphi\|_{L^{\infty}( B_{r}(x))} |z|^2 t\,dt\\
				&= \|D^2 \varphi\|_{L^{\infty}(B_{r}(x))} |z|^2.
			\end{split}
		\end{equation*}
		Substituting this into~\eqref{eq:step 1 in boundedness} and leveraging~\eqref{eq:comparison of k with FL} we obtain 
		\begin{equation}\label{eq:step 2 in boundedness}
			\begin{split}
				\left|\int_{B_r(x)\setminus B_{\eps}(x)}(\varphi(x)-\varphi(y))\,K(x-y)\,dy\right|&\leq \frac{1}{2}\|D^2 \varphi\|_{L^{\infty}(B_{r}(x))}\int_{B_{r}(0)\setminus B_{\eps}(0)}|z|^2K(z)\,dz\\
				&\leq\frac{1}{2}\|D^2 \varphi\|_{L^{\infty}(B_{r}(x))}\Lambda\int_{B_{r}(0)\setminus B_{\eps}(0)}|z|^{-N-2s+2}\,dz\\
				&\leq C_{5}\|D^2 \varphi\|_{L^{\infty}(B_{r}(x))}\left(r^{2-2s}-\eps^{2-2s}\right),
			\end{split}
		\end{equation}
		where $C_{5}$ is a positive constant depending only on $N$, $s$, and $\Lambda$.\\
		Moreover, the assumption that $\varphi\in L^{1}_{s}(\RR^N)$ yields
		\begin{equation}\label{eq:step 3 in boundedness}
			\begin{split}
				&\left|\int_{\RR^N\setminus B_{r}(x)}(\varphi(x)-\varphi(y))\,K(x-y)\,dy\right|\\
				&\leq \int_{\RR^N\setminus B_{r}(x)}\left|\varphi(x)-\varphi(y)\right|\,K(x-y)\,dy \\
				&\leq \Lambda|\varphi(x)|\int_{\RR^N\setminus B_{r}(x)}|x-y|^{-N-2s}\,dy+ \Lambda\int_{\RR^N\setminus B_{r}(x)}\frac{|\varphi(y)|}{|x-y|^{N+2s}}\,dy\\
				&=\Lambda|\varphi(x)|\int_{\RR^N\setminus B_{r}(0)}|z|^{-N-2s}\,dz + \Lambda\int_{\RR^N\setminus B_{r}(x)}\frac{|\varphi(y)|}{|x-y|^{N+2s}}\,dy\\
				&\leq C_{6}r^{-2s}\Lambda |\varphi(x)|+\Lambda \int_{\RR^N\setminus B_{r}(x)}\frac{1+|y|^{N+2s}}{|x-y|^{N+2s}}\frac{|\varphi(y)|}{1+|y|^{N+2s}}\,dy,
			\end{split}
		\end{equation}
		where $C_{6}$ is a positive constant depending only on $N$ and $s$.\\
		Noting that, for all~$y\in \RR^N\setminus B_{r}(x)$,
		\begin{equation*}
			\frac{1+|y|^{N+2s}}{|x-y|^{N+2s}}\leq C_{7}\left(1+|x|^{N+2s}\right),
		\end{equation*}
		where, $C_{7} = \max\left\{2^{N+2s+1}+\frac{3}{r^{N+2s}}, \frac{2^{N+2s}}{r^{N+2s}}\right\}$, we see that
		\begin{equation}\label{eq:step 4 in boundedness}
			\begin{split}
				&\left|\int_{\RR^N\setminus B_{r}(x)}(\varphi(x)-\varphi(y))\,K(x-y)\,dy\right|\\
				&\leq C_{6}r^{-2s}\Lambda |\varphi(x)|+ C_{7}\Lambda(1+|x|^{N+2s})\int_{\RR^N\setminus B_{r}(x)}\frac{|\varphi(y)|}{1+|y|^{N+2s}}\,dy\\
				&\leq C_{8} \left(|\varphi(x)|r^{-2s}+\|\varphi\|_{L^{1}_{s}(\RR^N)} \left(1+|x|^{N+2s}\right)\right),
			\end{split}
		\end{equation}
		where $C_{8} = \Lambda\max\{C_{6},C_{7}\}$.
		
		Combining \eqref{eq:step 2 in boundedness} and \eqref{eq:step 4 in boundedness} and recalling \eqref{def:nonlocal operator}, we conclude that, for any $x\in \RR^N$ and $r>\eps>0$,
		\begin{equation}\label{eq:step 5 in boundedness}
			\begin{split} 
				&\left|\int_{\RR^N\setminus B_{\eps}(x)}(\varphi(x)-\varphi(y))\,K(x-y)\,dy\right|\\
				&\leq C_{9} \left(\|D^2 \varphi\|_{L^{\infty}(B_{r}(x))}r^{2-2s}+ |\varphi(x)|r^{-2s}+\|\varphi\|_{L^{1}_{s}(\RR^N)} \left(1+|x|^{N+2s}\right)\right),
			\end{split}
		\end{equation}
		where $C_{9} =\max\{C_{5},C_{8}\}$.
		From this, the claim in~\eqref{eq:bound on nonlocal operator I} follows immediately.
		
		Now, further assume that $\varphi$ is compactly supported. 
		Given $x\in \RR^N$, we use \eqref{eq:step 2 in boundedness} to obtain, for any $r>\eps>0$, that
		\begin{equation}\label{eq:step 1 in decay}
			\begin{split} 
				&\left|\int_{\RR^N\setminus B_{\eps}(x)}\left(\varphi(x)-\varphi(y)\right)K(x-y)\,dy\right|\\
				&\leq	
				\left|\int_{B_{r}(x)\setminus B_{\eps}(x)}\left(\varphi(x)-\varphi(y)\right)K(x-y)\,dy\right|+ \left|\int_{\RR^N\setminus B_{r}(x)}\left(\varphi(x)-\varphi(y)\right)K(x-y)\,dy\right|\\
				&\leq C_{5}\|D^2 \varphi\|_{L^{\infty}(B_{r}(x))} r^{2-2s}+2\Lambda\|\varphi\|_{L^{\infty}(\RR^N)}\int_{\RR^N\setminus B_{r}(0)}|z|^{-N-2s}\,dz\\
				&\leq C_{10}\left(\|D^2 \varphi\|_{L^{\infty}(\RR^N)}r^2 + \|\varphi\|_{L^{\infty}(\RR^N)}\right)r^{-2s},
			\end{split}
		\end{equation}
		where $C_{5}$ is the constant that appears in \eqref{eq:step 2 in boundedness} and $C_{10}$ is a constant depending on $N$, $s$, $\Lambda$.\\
		If $x\in \RR^N$ is such that
		$$
		|x|> 2\max_{y\in \text{supp}(\varphi)} |y|,
		$$
		it follows that 
		$$
		|x-y|^{-N-2s}\leq 2^{N+2s}|x|^{-N-2s},\,\,\text{ for all }\,\, y\in \text{supp}(\varphi),
		$$
		and therefore
		\begin{equation}\label{eq:step 2 in decay}
			\begin{split}
				\left|\mathcal{L}_K \varphi(x)\right|&= \left|\int_{\text{supp}(\varphi)}\varphi(y)\,K(x-y)\,dy\right|\leq \Lambda\int_{\text{supp}(\varphi)}|\varphi(y)||x-y|^{-N-2s}\,dy\\
				&\leq 2^{N+2s}\Lambda\|\varphi\|_{L^{1}(\RR^N)} |x|^{-N-2s}.
			\end{split}
		\end{equation}
		
		Combining this and \eqref{eq:step 1 in decay}, the desired claim in~\eqref{eq:bound on nonlocal operator II} follows.
	\end{proof}
	\begin{proof}[Proof of Lemma~\ref{lem:distributional ineq}]
		
		We start by noting that since $u\in L^{1}_{s}(\RR^N;\RR^M)$, we have $u_{i}-\sqrt{k_n}\in~L^{1}_{s}(\RR^N)$, for every $i=1,...,M$. Furthermore, using Lemma~\ref{lem:integration by parts for nonlocal operator}, it is easy to see that
		$$
		\int_{\RR^N}(u_i-\sqrt{k_n})\mathcal{L}_K\varphi(x)\,dx = \int_{\RR^N} u_i(x)\mathcal{L}_K\varphi(x)\,dx,
		$$
		so that $\mathcal{L}_{K}(u_i-\sqrt{k_n})=\mathcal{L}_{K}u_i = -u_i P_{n}'(|u|^2)\in L^{1}_{\text{loc}}(\RR^N)$ (in the distributional sense). 
		Using Proposition~\ref{prop:Nonlocal Kato inequality} and \eqref{Main Problem} we see, for any nonnegative $\varphi\in \mathcal{D}(\RR^N)$, that
		\begin{equation}\label{eq:step 1 in distributional ineq}
			\begin{split}
				\int_{\RR^N} \left(u_i(x)-\sqrt{k_n}\right)^+\mathcal{L}_K\varphi(x)\,dx\leq -\int_{\RR^N}\text{sign}^+(u_i(x)-\sqrt{k_n}) \varphi(x) u_i(x)P_{n}'(|u(x)|^2)\,dx
			\end{split}
		\end{equation}
		As noted in \cite{MR1659256}, the derivative of the polynomial $P_n$ satisfies the relation
		$$
		P_n'(t) = Q_{n}(t)\prod_{i=1}^{n}(t-k_i),
		$$
		where 
		$$
		Q_n(t) = n\prod_{l=1}^{n-1}(t-r_l), 
		$$
		for some $r_l \in (k_l,k_{l+1})$ ($l=1,..., n-1$).
		
		With this in mind, the estimate in~\eqref{eq:step 1 in distributional ineq} yields
		\begin{equation}\label{eq:step 2 distributional ineq}
			\begin{split}
				&\int_{\RR^N}\left(u_i(x)-\sqrt{k_n}\right)^+\mathcal{L}_K\varphi(x)\,dx\\
				&\leq -\int_{\RR^N}\text{sign}^+(u_i(x)-\sqrt{k_n})\varphi(x) u_i(x)  \displaystyle\prod_{i=1}^{n}(|u(x)|^2-k_i)Q_{n}(|u(x)|^2)\,dx\\
				&\leq -\int_{\RR^N}\text{sign}^+(u_i(x)-\sqrt{k_n})\varphi(x) u_i(x)  \displaystyle\prod_{i=1}^{n}(|u_i(x)|^2-k_i)Q_{n}(|u_i(x)|^2)\,dx,
			\end{split}
		\end{equation}
		where the last inequality follows from the fact that $\displaystyle\prod_{i=1}^{n}(t-k_i)$ and $Q_n(t)$ are increasing for $t\geq k_n$.\\
		Defining 
		$$
		S(t):= \begin{cases}
			t(t+\sqrt{k_n})\displaystyle\prod_{j=1}^{n-1}(t^2-k_j)Q_n(t^2),\quad \text{if }\, t>\sqrt{k_n}\\
			2k_n\displaystyle\prod_{j=1}^{n-1}(k_n-k_j)Q_n(k_n), \quad \text{if }\, t\leq\sqrt{k_n},
		\end{cases}
		$$
		we thus obtain
		\begin{equation}\label{eq:step 3 in distributional ineq}
			\begin{split}
				\int_{\RR^N}\left(u_i(x)-\sqrt{k_n}\right)^+\mathcal{L}_K\varphi(x)\,dx &\leq -\int_{\RR^N}\varphi(x)S(|u_i(x)|)(u_i(x)-\sqrt{k_n})^+ \,dx.
			\end{split}
		\end{equation}
		Hence, taking $0<\alpha \leq S(\sqrt{k_n})$,
		\begin{equation}\label{eq:step 4 in distributional ineq}
			\begin{split}
				\int_{\RR^N}\left(u_i(x)-\sqrt{k_n}\right)^+\mathcal{L}_K\varphi(x)\,dx\leq -\alpha\int_{\RR^N}\varphi(x)(u_i(x)-\sqrt{k_n})^+ \,dx.
			\end{split}
		\end{equation}
		
		Noticing that $-u$ is also a distributional solution of \eqref{Main Problem}, we can apply the above procedure to $-u$ to conclude that
		\begin{equation}\label{eq:step 5 in distributional ineq}
			\begin{split}
				\int_{\RR^N}\left(-u_i(x)-\sqrt{k_n}\right)^+\mathcal{L}_K\varphi(x)\,dx\leq -\alpha\int_{\RR^N}\varphi(x)(-u_i(x)-\sqrt{k_n})^+ \,dx,
			\end{split}
		\end{equation}
		for every nonnegative $\varphi\in \mathcal{D}(\RR^N)$.
		
		Finally, since $(|u_i|-\sqrt{k_n})^+ = (-u_i-\sqrt{k_n})^+ + (u_i-\sqrt{k_n})^+$, the result follows from \eqref{eq:step 4 in distributional ineq} and \eqref{eq:step 5 in distributional ineq}.
	\end{proof}
	\section*{Acknowledgements} 
	Serena Dipierro, Giorgio Poggesi, and Enrico Valdinoci are members of the Australian Mathematical Society (AustMS).\\
	Serena Dipierro is supported by the Australian Future Fellowship FT230100333 ``New perspectives on nonlocal equations''.\\ 
	\\
	João Gonçalves da Silva is supported by a School of Mathematical Sciences Scholarship at Adelaide University.
	\\
	Giorgio Poggesi and João Gonçalves da Silva are supported by the Australian Research Council (ARC) Discovery Early Career Researcher Award (DECRA) DE230100954 ``Partial Differential Equations: geometric aspects and applications''.
	%%%
	%Giorgio Poggesi is also supported by the Australian Academy of Science %J G Russell Award. 
	%%%
	\\
	Enrico Valdinoci is supported by the Australian Laureate Fellowship FL190100081 ``Minimal surfaces, free boundaries and partial differential equations''.
	
	%Serena Dipierro is supported by the Australian Research Council
	%Future Fellowship FT230100333
	%``New perspectives on nonlocal equations''.
	%Jo{\~a}o Gon\c{c}alves da Silva and
	%Giorgio Poggesi are supported by the Australian Research Council Discovery Early Career Researcher Award (DECRA) DE230100954 ``Partial Differential Equations: geometric aspects and applications''.
	%Jo{\~a}o Gon\c{c}alves da Silva is supported by a Scholarship for International Research Fees at The University of Western Australia.
	%Enrico Valdinoci is supported by the Australian Laureate Fellowship FL190100081 ``Minimal surfaces, free boundaries and partial differential equations''.
	
%	\section*{Disclosure Statement}
	%The authors declare that they have no conflict of interest.
	
	\section*{Data Availability}
	No data was used for the research described in the article.

\end{document}